
\documentclass{article}
\usepackage[T2A]{fontenc}
\usepackage[cp1251]{inputenc}
\usepackage{amssymb}
\usepackage{amsmath}
\usepackage{amsthm}

\title{\bf The relation of semiadjacency and semicompatibility in $\cap\,$-semigroups of transformations}
\author{W. A. Dudek and V. S. Trokhimenko}
\date{Communicated by M. V. Volkov}

\begin{document}
\sloppy \maketitle

\newtheorem{theorem}{Theorem}
\newtheorem{proposition}{Proposition}
\newtheorem{lemma}{Lemma}
\newtheorem{definition}{Definition}
\newtheorem{coll}{Corollary}
\newcommand{\pr}{\mbox{\rm pr}_1\,}
\newcommand{\prr}{\mbox{\rm pr}_2\,}
\newcommand{\spr}{\mbox{\scriptsize pr}_1\,}
\newcommand{\sprr}{\mbox{\scriptsize pr}_2\,}

\begin{abstract}\noindent
We consider semigroups of transformations (partial mappings
defined on a set $A$) closed under the set-theoretic intersection
of mappings treated as subsets of $A\times A$. On such semigroups
we define two relations: the relation of semicompatibility which
identifies two transformations at the intersection of their
domains and the relation of semiadjacency when the image of one
transformation is contained in the domain of the second. Abstract
characterizations of such semigroups are presented.\\

\noindent
{\bf Mathematics Subject Classification 2010}: 20N15\\
{\bf Keywords}: Semigroup; Semigroup of transformations; Algebra of functions.
\end{abstract}

\medskip

{\bf 1}. Let $\mathcal{F}(A)$ be the set of all transformations (i.e., the partial maps) of a non-empty set $A$. The domain of $f\in\mathcal{F}(A)$ is denoted by $\pr f$, the image by $\prr f$. The symbol $\Delta_{\spr f}$ is reserved for the identity relation on $\pr f$. The composition (superposition) of maps $f,g\in\mathcal{F}(A)$ is defined as $(g\circ f)(a)=g(f(a))$, where for every $a\in A$ the left and right hand side are defined, or undefined, simultaneously (cf. \cite{Clif}). If the set $\Phi\subset\mathcal{F}(A)$ is closed with respect to such composition, then the algebra $(\Phi,\circ)$ is called a \textit{semigroup of transformations} (cf. \cite{Clif} or \cite{Sch3}). If $\Phi $ is also closed with respect to the set-theoretic intersection of transformations treated as subsets of $A\times A$, then the algebra $(\Phi,\circ,\cap)$ is called a \textit{$\cap$-semigroup of transformations}.

On such a $\cap$-semigroup we can consider the so-called \textit{semicompatibility relation} $\xi_{\Phi}$ defined as
follows:
\begin{equation}\label{f-38}
(f,g)\in\xi_{\Phi}\longleftrightarrow f\circ\Delta_{\spr g}=g\circ\Delta_{\spr f} .
\end{equation}
The algebraic system $(\Phi,\circ,\cap,\xi_{\Phi})$ is called a \textit{transformative $\cap$-semigroup of transformations}. The investigation of such semigroups was initiated by V.~V.~Vagner (cf. \cite{Vagner}) and continued by V. N. Sali\v{\i} and B. M. Schein (cf. \cite{49}, \cite{50} and \cite{Schein}). The first abstract characterization of a $\cap$-semigroup of transformations was found by V. S. Garvatski\v{\i} (cf. \cite{Garv}).

Some abstract characterizations of transformative $\cap$-semigroups of transformations can be deduced from results
proved in \cite{Dudtro2} and \cite{Trrel} for Menger $\cap$-algebras of $n$-place functions.

On $(\Phi,\circ)$ we can also consider the \textit{semiadjacency relation}
\[
\delta_{\Phi}=\{(f, g)\,|\,\prr f\subset\pr g\} .
\]

An abstract characterization of semigroups of transformations with this relation was given in \cite{pavl-2}. Later, in
\cite{garv-trokh}, was found an abstract characterization of semigroups of transformations containing these two relations,
i.e., an abstract characterization of an algebraic system $(\Phi,\circ,\xi_{\Phi},\delta_{\Phi})$. The $\cap$-semigroup of
transformations with the semiadjacency relation was described in \cite{Dud-Trokh}. The semiadjacency relation on algebras of
multiplace functions was investigated in \cite{trokhprim}. 

In this paper we find an abstract characterization of a $\cap$-semigroup of transformations equipped with the semicompatibility relation and the relation of semiadjacency.

We start with the following lemma.

\begin{lemma}\label{P-dt-1}
The relation of semiadjacency defined on a semigroup $(\Phi,\circ)$ satisfies the following two conditions:
\begin{eqnarray}
 & &\label{f-dt-1}(f,g)\in\delta_{\Phi}\longleftrightarrow\pr f\subset\pr (g\circ f),\\[4pt]
& & \label{f-dt-2} (f,g)\in\delta_{\Phi}\longrightarrow (f\circ h,g)\in\delta_{\Phi}.
\end{eqnarray}
\end{lemma}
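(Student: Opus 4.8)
The plan is to unwind the definitions of domain and image for a composition of partial maps, after which both assertions reduce to elementary set-theoretic inclusions. No closure property of $\Phi$ beyond being a semigroup under $\circ$ will be needed.

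First I would describe the domain of a composite. Since $(g\circ f)(a)=g(f(a))$ is defined precisely when $a\in\pr f$ and $f(a)\in\pr g$, we have
\[
\pr(g\circ f)=\{a\in\pr f\mid f(a)\in\pr g\}.
\]
In particular $\pr(g\circ f)\subseteq\pr f$ holds unconditionally, so the inclusion $\pr f\subset\pr(g\circ f)$ is in fact an equality of the two domains. This equality holds if and only if every $a\in\pr f$ satisfies $f(a)\in\pr g$; but $\{f(a)\mid a\in\pr f\}$ is exactly $\prr f$, so the condition is equivalent to $\prr f\subset\pr g$, i.e. to $(f,g)\in\delta_{\Phi}$. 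This establishes both directions of (\ref{f-dt-1}) at once.

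For (\ref{f-dt-2}), I would observe that every value taken by $f\circ h$ is also a value of $f$: indeed $(f\circ h)(a)=f(h(a))$, so each element of $\prr(f\circ h)$ has the form $f(b)$ for some $b$ in $\pr f$, giving $\prr(f\circ h)\subseteq\prr f$. Combining this with the hypothesis $\prr f\subset\pr g$ that comes from $(f,g)\in\delta_{\Phi}$ yields $\prr(f\circ h)\subset\pr g$, which is precisely $(f\circ h,g)\in\delta_{\Phi}$.

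The only point that requires attention—and the closest thing here to an obstacle—is the bookkeeping of partiality. One must read the domain of the composite correctly, recording that a point lies there only when both the inner and the outer map are defined at it, and one must check that the image inclusion $\prr(f\circ h)\subseteq\prr f$ does not tacitly presuppose that $h$ is total. Because the argument never invokes the reverse inclusions, both conditions survive this scrutiny, so the lemma follows directly from these two computations.
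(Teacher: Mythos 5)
Your proof is correct and complete. Note that the paper itself does not prove this lemma at all: it explicitly omits the proof, deferring to results in the cited papers of Dudek--Trokhimenko, Garvacki\u{i}--Trokhimenko and Pavlovski\u{i}. Your direct verification is the natural elementary argument those references would supply. The key identity $\pr (g\circ f)=\{a\in\pr f\mid f(a)\in\pr g\}$ follows from the paper's convention that $(g\circ f)(a)=g(f(a))$ is defined exactly when both sides are, and it immediately gives the unconditional inclusion $\pr(g\circ f)\subset\pr f$, so that the hypothesis $\pr f\subset\pr(g\circ f)$ forces equality and is equivalent to $\prr f\subset\pr g$, which is the first equivalence. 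The second implication follows, as you say, from $\prr(f\circ h)\subset\prr f$, and your caveat about partiality is handled correctly: every element of $\prr(f\circ h)$ is of the form $f(b)$ with $b=h(a)\in\pr f$, regardless of whether $h$ is total. The only cosmetic remark is that the paper uses $\subset$ for non-strict inclusion throughout, which you have implicitly (and correctly) assumed; with that convention in place, nothing is missing.
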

We omit the proof of this lemma since it is a simple consequence of results proved in \cite{Dud-Trokh}, \cite{garv-trokh} and
\cite{pavl-2}.

\bigskip

{\bf 2}. Each homomorphism $P$ of an abstract semigroup $(G,\cdot)$ into the semigroup $(\mathcal{F}(A),\circ)$ of all
transformations of a set $A$ is called a \textit{representation of $(G,\cdot)$ by transformations}. In the case when a representation is an isomorphism we say that it is \textit{faithful}.

With each representation $P$ of a semigroup $(G,\cdot)$ by transformations of $A$ are associated three binary relations:
\begin{eqnarray}
 & &\zeta_P =\{(g_1,g_2)\,|\,P(g_1)\subset P(g_2)\},\nonumber\\[4pt]
 & &\xi_P =\{(g_1,g_2)\,|\,P(g_1)\circ\bigtriangleup_{\spr P(g_2)}=
 P(g_2)\circ\bigtriangleup_ {\spr P(g_1)}\},\nonumber\\[4pt]
 & &\delta_P=\{(g_1, g_2)\,|\,\prr P(g_1)\subset\pr P(g_2)\}\nonumber
\end{eqnarray}
defined on $G$.

Let $(P_i)_{i\in I}$ be a family of representations of a semigroup $(G,\cdot)$ by transformations of disjoint sets
$(A_i)_{i\in I}$, respectively. By the \textit{sum} of this family we mean the map $P\colon\,g\mapsto P(g)$, where $g\in G$, and
$P(g)$ is the transformation on $A=\bigcup\limits_{i\in I}A_i$ defined by
$$
P(g)=\bigcup\limits_{i\in I} P_i(g).
$$
Such defined $P$ is a representation of $(G,\cdot)$. It is denoted by $\sum\limits_{i\in I}P_i$.

If $P$ is the sum of the family of representations $(P_i)_{i\in I}$, then obviously 
\begin{equation} \label{f-dt-4}
\zeta_P=\bigcap\limits_{i\in I}\zeta_{P_i},\quad \xi_P
=\bigcap\limits_{i\in I}\xi_{P_i},\quad\delta_P
=\bigcap\limits_{i\in I}\delta_{P_i}.
\end{equation}

\medskip

{\bf 3}. Following \cite{Clif} and \cite{Sch3} a binary relation
$\rho$ on a semigroup $(G,\cdot)$ is called:
\begin{itemize}
 \item\textit{stable} or \textit{regular}, if for all $x,y,u,v\in G$
 \[
 (x,y)\in\rho\wedge (u,v)\in\rho\longrightarrow (xu,yv)\in\rho,
 \]
  \item\textit{left regular}, if for all $x,u,v\in G$
 \[
 (u,v)\in\rho\longrightarrow (xu,xv)\in\rho,
 \]
 \item\textit{right regular}, if for all $x,y,u\in G$
 \[
 (x,y)\in\rho\longrightarrow (xu,yu)\in\rho,
 \]
 \item {\it left ideal}, if for all $x,y,u\in G$
\[
(x,y)\in\rho\longrightarrow (ux,y)\in\rho,
\]
 \item \textit{right negative}, if for all $x,y,u\in G$
 \[
(x,yu)\in\rho\longrightarrow (x,y)\in\rho.
\]
\end{itemize}

A quasi-order $\rho$, i.e., a reflexive and transitive relation,
is stable if and only if it is left and right regular (cf.
\cite{Clif}, \cite{Sch3}). Similarly, it is right negative if
$(xy,x)\in\rho$ for all $x,y\in G$.

Let $(G,\cdot)$ be an arbitrary semigroup, $(G^*,\cdot)$ -- the semigroup obtained from
$(G,\cdot)$ by adjoining an identity $e\not\in G$.
By a \textit{determining pair} of a semigroup $(G,\cdot)$ we
mean an ordered pair $(\varepsilon,W)$, where $\varepsilon$ is a
right regular equivalence relation on the semigroup $(G^*,\cdot)$, and $W$ is the
empty set or an $\varepsilon$-class which is a right ideal of
$(G,\cdot)$. Let $(H_a)_{a\in A}$ be the collection of all
$\varepsilon$-classes (uniquely indexed by elements of $A$) such
that $H_a\ne W$. As is well known (cf. \cite{Sch3}) with each
determining pair $(\varepsilon,W)$ is associated the so-called
\textit{simplest representation} $P_{(\varepsilon,W)}$ of
$(G,\cdot)$ by trasformations defined in the following way:
\begin{equation} \label{f-dt-5}
  (a_1,a_2)\in P_{(\varepsilon,W)}(g)\longleftrightarrow
  H_{a_1}g\subset H_{a_2},
\end{equation}
where $g\in G$, $a_1, a_2\in A$.

From results proved in \cite{Schein} and \cite{Sch3} we can
deduce the following properties of simplest representations.

\begin{proposition}\label{P-dt-2}
Let $(\varepsilon, W)$ be the determining pair of a semigroup
$(G,\cdot)$. Then for all $g_1,g_2\in G$, $x\in G^*$ we have
\begin{eqnarray}
& &\label{f-dt-6} (g_1,g_2)\in\zeta_{P_{(\varepsilon,W)}}\longleftrightarrow (\forall x)
(xg_1\not\in W\longrightarrow xg_1\equiv xg_2 (\varepsilon)),\\[4pt]
& &\label{f-dt-7} (g_1,g_2)\in\xi_{P_{(\varepsilon,W)}}\longleftrightarrow (\forall x) (xg_1\not\in W\wedge
xg_2\not\in W\longrightarrow xg_1\equiv xg_2 (\varepsilon)),\\[4pt]
& &\label{f-dt-8}
(g_1,g_2)\in\delta_{P_{(\varepsilon,W)}}\longleftrightarrow (\forall
x) (xg_1\not\in W \longrightarrow xg_1g_2\not\in W).
\end{eqnarray}
\end{proposition}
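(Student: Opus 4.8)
The plan is to unwind each of the three defining conditions using the explicit formula (\ref{f-dt-5}) for the simplest representation, namely $(a_1,a_2)\in P_{(\varepsilon,W)}(g)\longleftrightarrow H_{a_1}g\subset H_{a_2}$. The crucial preliminary observation is to determine, for a fixed element $g$, exactly which pairs $(a_1,a_2)$ lie in the transformation $P_{(\varepsilon,W)}(g)$. Since the $\varepsilon$-classes $H_a$ are indexed so that $H_a\neq W$, the inclusion $H_{a_1}g\subset H_{a_2}$ forces $a_2$ to be the unique index with $H_{a_1}g\subset H_{a_2}$ when such a class exists (and $a_1$ lies in the domain of $P(g)$ precisely when the right-translate $H_{a_1}g$ falls into some $\varepsilon$-class other than $W$). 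I would first isolate two elementary facts: (i) $a_1\in\pr P_{(\varepsilon,W)}(g)$ if and only if $H_{a_1}g\not\subset W$, equivalently $x g\not\in W$ for $x\in H_{a_1}$; and (ii) when $a_1$ is in the domain, the value $P_{(\varepsilon,W)}(g)(a_1)$ is the index of the $\varepsilon$-class containing $H_{a_1}g$. These two facts are what translate set-theoretic statements about the transformations into the membership and $\varepsilon$-equivalence conditions on the right-hand sides.

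With this dictionary in hand, each equivalence follows by direct translation. For (\ref{f-dt-6}), $P(g_1)\subset P(g_2)$ means every pair in $P(g_1)$ is a pair in $P(g_2)$: whenever $a_1$ is in the domain of $P(g_1)$ (i.e.\ $xg_1\not\in W$ for a representative $x\in H_{a_1}$) the common image must agree, which is exactly $xg_1\equiv xg_2\,(\varepsilon)$; I would check both directions, taking care that the representative $x$ ranges over $G^*$ so that the adjoined identity supplies the case $a_1$ itself. For (\ref{f-dt-8}), the semiadjacency condition $\prr P(g_1)\subset\pr P(g_2)$ says that every value taken by $P(g_1)$ lies in the domain of $P(g_2)$; using fact (ii) the image-point is the index of $H_{x g_1}$, and using fact (i) that index lies in $\pr P(g_2)$ precisely when $H_{xg_1}g_2\not\subset W$, i.e.\ $xg_1g_2\not\in W$, under the hypothesis $xg_1\not\in W$ that guarantees $xg_1$ actually produces a value.

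The condition (\ref{f-dt-7}) for $\xi$ is handled the same way but is the most delicate of the three, so I expect it to be the main obstacle. Here the defining relation involves $P(g_1)\circ\Delta_{\spr P(g_2)}=P(g_2)\circ\Delta_{\spr P(g_1)}$, i.e.\ the two transformations are required to agree only on the intersection of their domains. The subtlety is that one must restrict attention to those $a_1$ lying simultaneously in $\pr P(g_1)$ and $\pr P(g_2)$ — which by fact (i) is exactly the conjunction $xg_1\not\in W\wedge xg_2\not\in W$ — and on that common domain demand equality of images, namely $xg_1\equiv xg_2\,(\varepsilon)$. Getting the quantifier structure right, and verifying that the composition with the partial-identity relations $\Delta_{\spr P(g_i)}$ correctly encodes this domain-intersection, is where the bookkeeping must be done carefully. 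In all three cases the verification is a routine but attentive chase through the definitions once facts (i) and (ii) are established; I would present (\ref{f-dt-6}) in full to fix the method, then indicate that (\ref{f-dt-7}) and (\ref{f-dt-8}) follow by the analogous argument, highlighting the domain-intersection point for $\xi$.
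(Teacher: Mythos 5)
Your proposal is correct, and it supplies an argument the paper itself does not give: the authors merely remark that this proposition ``can be deduced'' from results of Schein and offer no proof, so there is no in-paper argument to compare against; your direct verification through the dictionary (i)--(ii) is the standard and natural one. Two ingredients that your sketch uses only tacitly should be made explicit in a full writeup. First, facts (i) and (ii) rest on the right regularity of $\varepsilon$: it is this that forces the whole translate $H_{a_1}g$ to lie inside a single $\varepsilon$-class, which simultaneously makes $P_{(\varepsilon,W)}(g)$ single-valued and upgrades ``$H_{a_1}g$ is not contained in $W$'' to ``$H_{a_1}g\subset H_{a_2}$ for some $a_2\in A$'', so that your fact (i) really is an equivalence. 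Second, the right-hand sides of \eqref{f-dt-6}--\eqref{f-dt-8} quantify over all $x\in G^*$, including $x\in W$, whereas the left-hand sides only see the classes $H_a\ne W$; the mismatch is harmless because $W$ is a right ideal, so $x\in W$ forces $xg_1\in W$ and the implications are vacuous for such $x$, while $e\notin W$ takes care of the class of the adjoined identity. With those two remarks inserted, your treatment of \eqref{f-dt-6} and \eqref{f-dt-8} closes up (note that in \eqref{f-dt-6} the needed conclusion $xg_2\notin W$ comes for free from $xg_1\equiv xg_2\,(\varepsilon)$ and $xg_1\notin W$, since $W$ is itself an $\varepsilon$-class or empty), and your reading of \eqref{f-dt-7} --- that $P(g_1)\circ\Delta_{\spr P(g_2)}$ is exactly the restriction of $P(g_1)$ to $\pr P(g_1)\cap\pr P(g_2)$, so the equality asserts agreement of the two transformations on their common domain --- is precisely the right one.
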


\begin{proposition}\label{P-p2} If a semigroup $(G,\cdot)$ and a semilattice $(G,\curlywedge )$
satisfy the identity
\begin{equation}\label{f-43}
  x (y\curlywedge z) = xy\curlywedge xz,
\end{equation}
then
\begin{equation}\label{f-5}
P_{(\varepsilon, W)}(g_1\curlywedge g_2) = P_{(\varepsilon,
W)}(g_1)\cap P_{(\varepsilon, W)}(g_2)
\end{equation}
holds for arbitrary elements $g_1,g_2\in G$ and a determining pair
$(\varepsilon, W)$ of $(G,\cdot)$ if and only if
\begin{eqnarray}
& &\label{f-6} g_1\in W\longrightarrow g_1\curlywedge
g_2\in W,\\
& &\label{f-7} g_1\curlywedge g_2\not\in W\longrightarrow
g_1\equiv g_2 (\varepsilon),\\
& &\label{f-8} g_1\not\in W\wedge g_1\equiv g_2
(\varepsilon)\longrightarrow g_1\curlywedge g_2\equiv
g_1(\varepsilon).
\end{eqnarray}
\end{proposition}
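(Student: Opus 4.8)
The plan is to reduce the operator identity \eqref{f-5} to a pointwise statement about $\varepsilon$-classes and then match that statement against \eqref{f-6}--\eqref{f-8}, for a fixed determining pair $(\varepsilon,W)$. First I would record the basic translation of membership in a simplest representation. Since $\varepsilon$ is right regular, for any $a_1\in A$ and any representative $h\in H_{a_1}$ the whole set $H_{a_1}g$ lies in the single $\varepsilon$-class of $hg$; hence by \eqref{f-dt-5},
\[
(a_1,a_2)\in P_{(\varepsilon,W)}(g)\longleftrightarrow hg\in H_{a_2}.
\]
Applying this to $g=g_1\curlywedge g_2$ and using the distributive identity \eqref{f-43} in the form $h(g_1\curlywedge g_2)=hg_1\curlywedge hg_2$, membership of $(a_1,a_2)$ in $P_{(\varepsilon,W)}(g_1\curlywedge g_2)$ becomes $hg_1\curlywedge hg_2\in H_{a_2}$, while membership in $P_{(\varepsilon,W)}(g_1)\cap P_{(\varepsilon,W)}(g_2)$ becomes $hg_1\in H_{a_2}\wedge hg_2\in H_{a_2}$. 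This converts \eqref{f-5} into a family of class conditions indexed by representatives~$h$.

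Next I would split \eqref{f-5} into its two inclusions and quantify each over all $a_1,a_2$ and all $g_1,g_2$. The key simplification is that it suffices to take $h=e$: since $e\notin G\supseteq W$ the class $H_e$ is one of the $H_a$, and as $g_1,g_2$ run over $G$ the pair $(eg_1,eg_2)=(g_1,g_2)$ already realizes every element of $G\times G$, every other pair $(hg_1,hg_2)$ being a special instance lying in $G\times G$. Eliminating the quantifier over $a_2$ then yields: the inclusion $\supseteq$ holds for all $g_1,g_2$ exactly when
\[
g_1\equiv g_2\,(\varepsilon)\wedge g_1\notin W\longrightarrow g_1\curlywedge g_2\equiv g_1\,(\varepsilon),
\]
which is precisely \eqref{f-8}; and the inclusion $\subseteq$ holds for all $g_1,g_2$ exactly when
\[
g_1\curlywedge g_2\notin W\longrightarrow g_1\equiv g_2\equiv g_1\curlywedge g_2\,(\varepsilon).
\]

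Finally I would verify that, in the presence of \eqref{f-8}, the displayed $\subseteq$-condition is equivalent to the conjunction of \eqref{f-6} and \eqref{f-7}, which closes the proof. In the forward direction, transitivity of $\varepsilon$ extracts $g_1\equiv g_2\,(\varepsilon)$, giving \eqref{f-7}; and here one uses that $W$ is itself an $\varepsilon$-class, for if $g_1\in W$ while $g_1\curlywedge g_2\notin W$, then the conclusion $g_1\curlywedge g_2\equiv g_1\,(\varepsilon)$ would force $g_1\curlywedge g_2\in W$, a contradiction, so \eqref{f-6} holds. Conversely, given \eqref{f-6}--\eqref{f-8} and assuming $g_1\curlywedge g_2\notin W$, one obtains $g_1\notin W$ from the contrapositive of \eqref{f-6}, then $g_1\equiv g_2\,(\varepsilon)$ from \eqref{f-7}, and finally $g_1\curlywedge g_2\equiv g_1\,(\varepsilon)$ from \eqref{f-8}. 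I expect the main obstacle to be the bookkeeping in the reduction step --- justifying rigorously that the single representative $h=e$ captures all instances and correctly eliminating the quantifier over $a_2$ --- together with the repeated and essential use of the fact that membership in $W$ is a class condition and not merely a condition on individual elements.
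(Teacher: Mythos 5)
Your argument is correct. The reduction of the operator identity \eqref{f-5} to the pointwise class condition is sound: right regularity of $\varepsilon$ on $(G^*,\cdot)$ does guarantee that $H_{a_1}g$ sits inside a single $\varepsilon$-class, so membership in $P_{(\varepsilon,W)}(g)$ is correctly tested on one representative; the class of $e$ is indeed one of the $H_a$ because $W\subseteq G$ forces $e\notin W$; and the passage from a general representative $h$ to $h=e$ is legitimate because the instance at $(h;g_1,g_2)$ is literally the instance at $(e;hg_1,hg_2)$, so the two families of conditions have the same conjunction (note this uses the global quantification over $g_1,g_2$, which is the intended reading). The final bookkeeping matching the two inclusions against \eqref{f-6}--\eqref{f-8}, including the use of the fact that $W$ is itself an $\varepsilon$-class to extract \eqref{f-6}, is also right. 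The one point of comparison to make is that the paper does not prove this proposition at all: it is stated without proof and justified by appeal to the analogous theorem for Menger algebras of rank $n$ in the cited works of Trokhimenko and Dudek--Trokhimenko, of which Proposition~\ref{P-p2} is the case $n=1$. Your direct, self-contained derivation from \eqref{f-dt-5} is therefore a useful complement: it buys transparency and avoids importing the $n$-ary machinery, at the cost of not yielding the more general statement the authors actually lean on.
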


An analogous result was proved in \cite{Trokh1} (see also
\cite{Dudtro2}) for Menger algebras of rank $n$. For $n=1$ it
gives the above proposition.

\bigskip

{\bf 4}. In this section we will consider a {\it semilattice algebraic system} $(G,\cdot,\curlywedge,\xi,\delta)$, i.e., an algebraic system $(G,\cdot,\curlywedge,\xi,\delta)$ such that $(G,\cdot)$ is a semigroup, $(G,\curlywedge)$ is
a semilattice, $\xi$ is a left regular binary relation on
$(G,\cdot)$ containing the natural order $\zeta$ of a semilattice
$(G,\curlywedge)$, $\delta$ is a left ideal relation on $(G,\cdot)$. Assume
that $(G,\cdot,\curlywedge,\xi,\delta)$ satisfies the condition
\eqref{f-43}, as well as the conditions:
\begin{eqnarray} & &\label{f-44} (x,y),(u,v)\in\zeta\,\wedge\,
(y,v)\in\xi\longrightarrow (u,x)\in\xi,\\
& &\label{f-45} (x,y)\in\xi\longrightarrow (x\curlywedge y) u =
xu\curlywedge yu,
\end{eqnarray}
where $x,y,z,u,v\in G$, \ $(x,y)\in\zeta\longleftrightarrow x\curlywedge y = x$. Moreover, we assume also
that in a semigroup $(G^*,\cdot)$ with the adjoined identity $e$ we
have $(e,e)\in\zeta$, $(e,e)\in\delta$ and $(x,e)\in\delta$ for all $x\in G$.

\begin{proposition}\label{P-TVS}
In a semilattice algebraic system $(G,\cdot,\curlywedge,\xi,\delta)$ the relation $\xi$ is
reflexive and symmetric; the relation $\zeta$ is stable on a
semigroup $(G,\cdot)$.
\end{proposition}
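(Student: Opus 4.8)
The plan is to treat the three assertions separately, relying only on the stated hypotheses together with two elementary facts: that the natural order $\zeta$ of a semilattice is reflexive and transitive (hence a quasi-order), and the quoted criterion that a quasi-order is stable precisely when it is left and right regular. Reflexivity of $\xi$ is immediate: idempotency of $\curlywedge$ gives $x\curlywedge x=x$, so $(x,x)\in\zeta$ for every $x\in G$, and since $\xi\supseteq\zeta$ by hypothesis we get $x\downarrow x$ for all $x$.

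For symmetry, the key is a careful instantiation of \eqref{f-44}. Assume $x\downarrow y$; I would apply \eqref{f-44} with its first order-premise taken as $x\leqslant x$ and its second as $y\leqslant y$, both valid by reflexivity of $\leqslant$. With these choices the $\downarrow$-premise of \eqref{f-44} collapses to $x\downarrow y$, which is exactly our hypothesis, while the conclusion becomes $y\downarrow x$. Concretely, reading \eqref{f-44} as $p\leqslant q\wedge r\leqslant s\wedge q\downarrow s\longrightarrow r\downarrow p$, I set $p=q=x$ and $r=s=y$, so that the first two premises are the reflexive inequalities, the third premise is $x\downarrow y$, and the conclusion is $y\downarrow x$.

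For stability of $\zeta$ I would invoke the quoted criterion and verify left and right regularity. Left regularity follows from the distributive law \eqref{f-43}: if $u\leqslant v$, i.e.\ $u\curlywedge v=u$, then $xu\curlywedge xv=x(u\curlywedge v)=xu$, so $xu\leqslant xv$. Right regularity uses \eqref{f-45} together with the inclusion $\zeta\subseteq\xi$: if $x\leqslant y$, i.e.\ $x\curlywedge y=x$, then $(x,y)\in\zeta\subseteq\xi$ gives $x\downarrow y$, so \eqref{f-45} applies and yields $xu\curlywedge yu=(x\curlywedge y)u=xu$, whence $xu\leqslant yu$. Being a quasi-order that is both left and right regular, $\zeta$ is stable.

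I expect the only genuinely non-mechanical step to be the symmetry argument, where one must choose the four variables in \eqref{f-44} so that both order-premises degenerate to reflexivity while the $\downarrow$-premise matches the given pair and the conclusion delivers the reversed pair; once this instantiation is found, the remaining parts are direct computations with the defining equivalence $x\leqslant y\Leftrightarrow x\curlywedge y=x$ and the two distributive-type identities.
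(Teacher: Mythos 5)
Your proposal is correct and follows essentially the same route as the paper: reflexivity of $\xi$ from $\zeta\subseteq\xi$, symmetry by instantiating \eqref{f-44} with both order-premises reflexive (exactly the paper's choice $x\leqslant x$, $y\leqslant y$, $x\downarrow y$), and stability of $\zeta$ by deriving left regularity from \eqref{f-43} and right regularity from $\zeta\subseteq\xi$ together with \eqref{f-45}. No gaps.
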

\begin{proof}
The relation $\xi$ is reflexive since $\zeta\subset\xi$ and
$\zeta$ is the natural order on the semillatice $(G,\curlywedge)$. It
also is symmetric because for any $(x,y)\in\xi$ we have
$x\zeta x$, $y\zeta y$ and $x\xi y$, whence, by
\eqref{f-44}, we obtain $(y,x)\in\xi$.

To prove that $\zeta$ is stable on the semigroup $(G,\cdot)$ assume
that $(x,y)\in\zeta$ for some $x,y\in G$. Then $x\curlywedge
y=x$. Hence $z(x\curlywedge y)=zx$, which, by \eqref{f-43}, gives
$zx\curlywedge zy = zx$. Thus $(zx,zy)\in\zeta$. So, $\zeta$ is
left regular. Since $\zeta\subset\xi$, from $(x,y)\in\zeta$ it
follows $(x,y)\in\xi$, which, by \eqref{f-45}, implies
$(x\curlywedge y)z=xz\curlywedge yz$. Hence $xz=xz\curlywedge yz$,
i.e., $(xz,yz)\in\zeta$. This means that $\zeta$ is right regular.
Consequently, $\zeta$ is stable on the semigroup $(G,\cdot)$.
\end{proof}

Further for the sake of simplicity the formula $x\delta y\,\wedge\,xy\zeta
z$ will be written as $x\boxdot y\zeta z$.

\begin{definition}\label{D-d3a}\rm A subset $H\subset G$ is {\it $f_{\xi}$-closed} if the implication
\begin{equation}\label{f-54}
(u,v)\in\xi\,\wedge\,(u\curlywedge v)x\boxdot y\zeta
zt\,\wedge\, u,vx\in H\longrightarrow z\in H
\end{equation}
is valid for all $x,y,t\in G^*$ and $z,u,v\in G$.
\end{definition}

Clearly the set of all $f_{\xi}$-closed subsets of $G$ forms a complete lattice of subsets of
$G$ under intersection. Define $f_{\xi}(X)$ to be the least $f_{\xi}$-closed subset of $G$ containing
$X\subset G$.

\begin{proposition}\label{P-p9a} A non-empty subset $H$ of a semilattice algebraic
system $(G,\cdot,\curlywedge,\xi,\delta)$ is $f_{\xi}$-closed if
and only if it satisfies the conditions
\begin{eqnarray}
& &\label{f-22} xy\in H\longrightarrow x\in H,\\
& &\label{f-23} (g_1,g_2)\in\delta\,\wedge\, g_1\in H\longrightarrow g_1g_2\in H,\\
& &\label{f-24} g_1\curlywedge g_2=g_1\in H\longrightarrow g_2\in H,\\
& &\label{f-55} (g_1,g_2)\in\xi\,\wedge\, g_1, g_2x\in
H\longrightarrow (g_1\curlywedge g_2)x\in H,
\end{eqnarray}
where $x$ in the condition $\eqref{f-55}$ may be the empty symbol.
\end{proposition}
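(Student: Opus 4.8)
The statement is an equivalence, so the plan is to prove the two implications separately. The single closure condition \eqref{f-54} is a universally quantified implication over $x,y,t\in G^*$ and $z,u,v\in G$, and the strategy is: for \emph{necessity} specialize its variables to recover each of \eqref{f-22}--\eqref{f-55}, and for \emph{sufficiency} decompose \eqref{f-54} into a chain of applications of these four conditions. Throughout I would use, via Proposition~\ref{P-TVS}, that $\downarrow$ (the relation $\xi$) is reflexive, together with idempotency of $\curlywedge$, reflexivity of $\leqslant$, and the stipulated facts $e\leqslant e$ and $w\vdash e$ for all $w\in G$.

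For necessity, each condition is a suitable instance of \eqref{f-54}. Condition \eqref{f-55} is obtained by taking $u=g_1$, $v=g_2$, leaving $x$ free, and setting $y=t=e$, $z=(g_1\curlywedge g_2)x$: the hypotheses reduce to $g_1\downarrow g_2$, $g_1\in H$ and $g_2x\in H$, the $\vdash$-part is the trivial $(g_1\curlywedge g_2)x\vdash e$, and the order chain collapses to $(g_1\curlywedge g_2)x\leqslant(g_1\curlywedge g_2)x$. Condition \eqref{f-23} is the instance $u=v=g_1$, $x=t=e$, $y=g_2$, $z=g_1g_2$, where the hypothesis $g_1\vdash g_2$ supplies the $\vdash$-part. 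Condition \eqref{f-24} is the instance $u=v=g_1$, $x=y=t=e$, $z=g_2$, where $g_1\curlywedge g_2=g_1$ gives $g_1\leqslant g_2$. Condition \eqref{f-22} is the instance $u=v=g_1g_2$, $x=y=e$, $z=g_1$, $t=g_2$, so that the chain becomes $g_1g_2\leqslant g_1g_2$ and both $u$ and $vx$ equal $g_1g_2\in H$. In every case reflexivity discharges the order relation and $w\vdash e$ (or the given $\vdash$-hypothesis) discharges the $\vdash$-premise.

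For sufficiency, assume $H$ satisfies \eqref{f-22}--\eqref{f-55} and that the hypotheses of \eqref{f-54} hold. I would run the chain \eqref{f-55}$\,\Rightarrow\,$\eqref{f-23}$\,\Rightarrow\,$\eqref{f-24}$\,\Rightarrow\,$\eqref{f-22}. Applying \eqref{f-55} to $g_1=u$, $g_2=v$ with the symbol $x$ gives $(u\curlywedge v)x\in H$ from $u\downarrow v$, $u\in H$ and $vx\in H$. Since $(u\curlywedge v)x\vdash y$, condition \eqref{f-23} then yields $((u\curlywedge v)x)y\in H$. Writing $p=((u\curlywedge v)x)y$, the hypothesis $p\leqslant zt$ means $p\curlywedge(zt)=p\in H$, so \eqref{f-24} gives $zt\in H$, and finally \eqref{f-22} delivers $z\in H$.

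The only genuine care is the treatment of the adjoined identity $e$, since $x,y,t$ range over $G^*$; this is where I expect bookkeeping slips. When $y=e$ the product $((u\curlywedge v)x)y$ reduces to $(u\curlywedge v)x$ and the use of \eqref{f-23} is vacuous (its premise is the trivial $w\vdash e$); when $t=e$ we have $zt=z$ and the final step via \eqref{f-22} is trivial; when $x=e$ condition \eqref{f-55} must be read with its empty-symbol option. Checking that each intermediate element still lies where the conditions require it in these degenerate cases is the main, though routine, obstacle; no step needs more than the semilattice identities, reflexivity of $\leqslant$ and $\downarrow$, and the prescribed behaviour of $e$.
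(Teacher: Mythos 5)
Your proposal is correct and follows essentially the same route as the paper: the same specializations of \eqref{f-54} (up to renaming of variables) yield \eqref{f-22}--\eqref{f-55} for necessity, and the same chain \eqref{f-55}, then \eqref{f-23}, then \eqref{f-24}, then \eqref{f-22} establishes sufficiency. The remarks about reflexivity of $\downarrow$ and $\leqslant$, the stipulation $w\vdash e$, and the handling of the adjoined identity $e$ match the paper's use of Proposition~\ref{P-TVS} and the conventions on $G^*$.
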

\begin{proof} Let $H$ be an $f_{\xi}$-closed subset of $G$. Then
\begin{equation}\label{f-56}
(u,v)\in\xi\,\wedge\, (u\curlywedge v) x\delta y\,\wedge\,
(u\curlywedge v) xy\zeta zt\,\wedge\, u,vx\in H\longrightarrow
z\in H
\end{equation}
for all $x,y,t\in G^*$ and $z,u,v\in G$.

Using $\eqref{f-56}$ we can prove conditions
$\eqref{f-22}-\eqref{f-55}$. Indeed, for $u =v=xy$, $x=y=e$,
$t=y$, $z=x$ the implication $\eqref{f-56}$ has the form
\[ (xy,xy)\in\xi\,\wedge\, (xy\curlywedge
xy)e\delta e\,\wedge\, (xy\curlywedge xy)e\zeta xy\,\wedge\,
xy,xye\in H\longrightarrow x\in H.
\]
Since relations $\xi$ and $\zeta$ are reflexive and the operation
$\curlywedge$ is idempotent, the last condition is equivalent to
the implication $\eqref{f-22}$.

For  $u=v=g_1$, $x=e$, $y=g_1$, $t=e$, $z=g_1g_2$ the implication
$\eqref{f-56}$ gives the condition
\[ (g_1,g_1)\in\xi\,\wedge\,
(g_1\curlywedge g_1)e\delta g_2\,\wedge\,(g_1\curlywedge
g_1)eg_2\zeta g_1g_2e\,\wedge\,g_1, g_1e\in H\longrightarrow
g_1g_2\in H,
\]
which is equivalent to $\eqref{f-23}$.

Similarly for $u=v=g_1$, $x=y=t=e$, $z=g_2 $ from $\eqref{f-56}$
we obtain
\[
(g_1,g_1)\in\xi\,\wedge\, (g_1\curlywedge g_1)e\delta
e\,\wedge\,(g_1\curlywedge g_1)ee\zeta g_2e\,\wedge\,g_1,
g_1e\in H \longrightarrow g_2\in H,
\]
i.e., $(g_1,g_2)\in\zeta \wedge g_1\in H\longrightarrow g_2\in H$.
So, $\eqref{f-56}$ implies $\eqref{f-24}$.

Finally, $\eqref{f-56}$ for $u=g_1$, $v=g_2$, $y=e$,
$z=(g_1\curlywedge g_2)x$, $t=e$, gives
\[
(g_1,g_2)\in\xi \wedge (g_1\curlywedge g_2)x\delta e\wedge
(g_1\curlywedge g_2)xe\zeta (g_1\curlywedge g_2)xe\wedge g_1,
g_2x\in H\longrightarrow (g_1\curlywedge g_2)x\in H,
\]
which implies $\eqref{f-55}$.

To prove the converse assume that the conditions $\eqref{f-22}$
$\eqref{f-23}$, $\eqref{f-24}$, $\eqref{f-55}$ and the premise of
$\eqref{f-56}$ are satisfied. Then from $(u,v)\in\xi\wedge u,
vx\in H$, according to $\eqref{f-55}$, we obtain $(u\curlywedge
v)x\in H$. Since $(u\curlywedge v)x\delta y$, by $\eqref{f-23}$,
the last condition implies $(u\curlywedge v)xy\in H$. But
$(u\curlywedge v)xy\zeta zt$, by $\eqref{f-24}$, gives $zt\in
H$, which by $\eqref{f-22}$ gives $z\in H$. Thus, $\eqref{f-22}$,
$\eqref{f-23}$, $\eqref{f-24}$, $\eqref{f-55}$ imply
$\eqref{f-56}$.
\end{proof}

For a non-empty subset $H$ of $G$ we define the set
\[
F_{\xi}(H)=\{z\,|\,(\exists u,v,x,y,t) \;(u,v)\in\xi\wedge
(u\curlywedge v)x\boxdot y\zeta zt\wedge u,vx\in H)\},
\]
where $x,y,t\in G^*$ and $z,u,v\in G$. 

\begin{lemma}\label{Lem} For any subsets $H,$ $H_1,$ $H_2$ of $G$ we have

$(a)$ \ $H\subset F_{\xi}(H)$,

$(b)$ \ $F_{\xi}(H_1)\subset F_{\xi}(H_2)$\ for $H_1\subset H_2$.

$(c)$ \ $F_{\xi}(H)=H$ \ for any $f_{\xi}$-closed subset $H$ of
$G$.
\end{lemma}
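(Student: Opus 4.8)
The plan is to recognize this as the standard ``closure operator'' package attached to the condition \eqref{f-54}: part (a) is extensivity, part (b) is monotonicity, and part (c) is idempotency on sets that are already $f_{\xi}$-closed. None of the three parts should require any deep structural fact about the system; the only place where the axioms genuinely enter is (a), where I must exhibit witnesses for the existential block defining $F_{\xi}(H)$ and check that the factors involving the adjoined identity $e$ collapse correctly. So I would dispatch (b) and (c) almost formally and concentrate the real (though modest) work on (a).

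For (a), I would fix an arbitrary $z\in H$ and instantiate the existential quantifier in the definition of $F_{\xi}(H)$ by the choice $u=v=z$ and $x=y=t=e$. With this substitution the four conjuncts become $z\downarrow z$, then $(z\curlywedge z)e\vdash e$, then $(z\curlywedge z)ee\leqslant ze$, and finally $z,\,ze\in H$. Here $z\downarrow z$ holds because $\xi$ is reflexive (Proposition~\ref{P-TVS}); since $\curlywedge$ is idempotent and $e$ is an identity, the middle two formulas simplify to $z\vdash e$ and $z\leqslant z$, which hold respectively by the stipulation $x\vdash e$ for all $x$ and by reflexivity of the order $\zeta$; and the membership conjunct reduces to $z\in H$, which is our starting assumption. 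Hence $z\in F_{\xi}(H)$, giving $H\subset F_{\xi}(H)$.

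Part (b) is pure monotonicity and needs no computation: if $z\in F_{\xi}(H_1)$, then some tuple $(u,v,x,y,t)$ witnesses the defining condition with $u,vx\in H_1$; since $H_1\subset H_2$ we get $u,vx\in H_2$, while the purely algebraic conjuncts $u\downarrow v$ and $(u\curlywedge v)x\boxdot y\leqslant zt$ are untouched, so the same tuple proves $z\in F_{\xi}(H_2)$. For (c) I would simply combine (a) with the defining property of $f_{\xi}$-closed sets: the inclusion $H\subset F_{\xi}(H)$ is (a), and conversely any $z\in F_{\xi}(H)$ supplies witnesses making the premise of \eqref{f-54} true with $u,vx\in H$, so that $f_{\xi}$-closedness forces $z\in H$; thus $F_{\xi}(H)\subset H$ and equality follows. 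The main (and essentially only) obstacle is the bookkeeping in (a): the witnesses must be picked so that every occurrence of $e$ cancels, which is exactly why reflexivity of both $\xi$ and $\zeta$ together with the boundary hypothesis $z\vdash e$ on the adjoined identity are invoked.
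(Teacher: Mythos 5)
Your proposal is correct and follows essentially the same route as the paper: part (a) uses exactly the same witnesses $u=v=z$, $x=y=t=e$ (justified by reflexivity of $\xi$ and $\zeta$, idempotency of $\curlywedge$, and the stipulation $x\vdash e$), while (b) and (c) are dispatched by the same monotonicity and closure arguments. Your write-up is in fact slightly more explicit than the paper's about which axioms make the $e$-factors collapse, but there is no substantive difference.
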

\begin{proof}
Indeed, if $z\in H$, then
$$
(z,z)\in\xi\wedge(z\curlywedge z)e\boxdot e\zeta ze\wedge z,
ze\in H ,
$$
which means that $z\in F_{\xi}(H)$. Hence, $H\subset F_{\xi}(H)$.

The second condition is obvious.

To prove the last condition assume that $H$ is an $f_{\xi}$-closed
subset of $G$. Then for any $z\in F_{\xi}(H)$ and some $x,y,t\in
G^*$, $u,v\in G$ we have
$$
(u,v)\in\xi\wedge(u\curlywedge v)x\boxdot y\zeta zt\wedge
u,vx\in H.
$$
Since $H$ is $f_{\xi}$-closed, the above implies $z\in H$. Thus
$F_{\xi}(H)\subset H,$ which together with $(a)$ proves
$F_{\xi}(H)=H$.
\end{proof}

Further, for a non-empty subset $H$ of $G$ we put \ $\stackrel{0}{F}_{\xi}\!\!(H)=H$ and $\stackrel{n}{F}_{\xi}\!\!(H)=
F_{\xi}\Big(\!\!\stackrel{n-1}{F}_{\!\!\!\xi}\!\!(H)\Big)$ for any positive integer $n$. Then, by Lemma \ref{Lem}, we have
$$
H=\stackrel{0}{F}_{\xi}\!\!(H)\subset\stackrel{1}{F}_{\xi}\!\!(H)\subset\stackrel{2}{F}_{\xi}\!\!(H)\subset\stackrel{3}{F}_{\xi}\!\!(H)\subset\ldots$$

\begin{proposition}\label{P-10a}
For any subset $H$ of a semilatice algebraic system
$(G,\cdot,\curlywedge,\xi,\delta)$ we have
\begin{equation}\label{f-57}
 f_{\xi}(H)=\bigcup\limits_{n=0}^{\infty}\!\stackrel{n}{F}_{\xi}\!(H).
\end{equation}
\end{proposition}
\begin{proof}
Let
$\overline{H}_{\xi}=\bigcup\limits_{n=0}^{\infty}\!\stackrel{n}{F}_{\xi}\!(H)$
and
$$
(u,v)\in\xi\wedge (u\curlywedge v)x\delta y\wedge (u\curlywedge
v)xy\zeta zt\wedge u, vx\in\overline{H}_{\xi}\, ,
$$
for some $x,y,t\in G^*$ and $z,u,v\in G$. Since $u,
vx\in\overline{H}_{\xi}$, there are natural numbers $n_1,n_2$ such
that $u\in\stackrel{\;n_1}{F}_{\!\!\xi}\!\!(H)$ and
$vx\in\stackrel{\;n_2}{F}_{\!\!\xi}\!\!(H)$. Hence
$\stackrel{\;n_i}{F}_{\!\!\xi}\!(H)\subset\stackrel{n}{F}_{\xi}\!(H)$,
$i=1,2$, for $n=\max(n_1, n_2)$. Therefore
$$
(u,v)\in\xi\wedge (u\curlywedge v) x\boxdot y\zeta zt\wedge
u, vx\in\stackrel{n}{F}_{\xi}\!(H)\,  ,
$$
so,
$z\in\stackrel{n+1}{F}_{\!\!\xi}\!\!(H)\subset\overline{H}_{\xi}$.
This proves that $\overline{H}_{\xi}$ is a $f_{\xi}$-closed subset
of $G$.

By the definition $H\subset f_{\xi}(H)$. Hence, by Lemma
\ref{Lem}, ${F}_{\xi}(H)\subset F_{\xi}(f_{\xi}(H)) = f_{\xi}(H)$.
Similarly, $\stackrel{2}{F}_{\xi}\!\!(H)\subset f_{\xi}(H)$, etc.
Consequently, $\stackrel{\,n}{F}_{\xi}\!\!(H)\subset f_{\xi}(H)$
for any $n$, which implies
$\bigcup\limits_{n=0}^{\infty}\!\!\stackrel{n}{F}_{\xi}\!\!(H)\subset
f_{\xi}(H)$, i.e., $\overline{H}_{\xi}\subset f_{\xi}(H)$. On the
other hand,
$H\subset\bigcup\limits_{n=0}^{\infty}\!\!\stackrel{n}{F}_{\xi}\!\!(H)
=\overline{H}_{\xi}$. Therefore $f_{\xi}(H)\subset
f_{\xi}(\overline{H}_{\xi}) =\overline{H}_{\xi}$. Thus
$\overline{H}_{\xi}=f_{\xi}(H)$, which completes the proof of
\eqref{f-57}.
\end{proof}

Using the method of mathematical induction we can easily prove the
following proposition:

\begin{proposition}\label{P-10ab} For each
subset $H$ of a semilattice algebraic system
$(G,\cdot,\curlywedge,\xi,\delta)$, any natural number $n>1$ and
any $z\in G$ we have $z\in\stackrel{n}{F}_{\xi}\!\!(H)$ if and
only if following system of conditions
$$
\left(\begin{array}{c}
(u_1,v_1)\in\xi\wedge (u_1\curlywedge v_1) x_1\boxdot y_1\zeta zt_1, \ \ \ \ \ \ \ \ \ \\
\bigwedge\limits_{i=1}^{2^{n-1}-1}\left(\begin{array}{l}
(u_{2i},v_{2i})\in\xi\wedge (u_{2i}\curlywedge v_{2i})x_{2i}\boxdot y_{2i}\zeta
u_it_{2i},\\
 (u_{2i +1},v_{2i+1})\in\xi\wedge (u_{2i +1}\curlywedge v_{2i +1})x_{2i+1}\boxdot
y_{2i+1}\zeta v_ix_it_{2i+1}
\end{array}\right),\\
\bigwedge\limits_{i=2^{n-1}}^{2^n-1}(u_i, v_ix_i\in H)
 \end{array}\right)
$$
is valid for some\ $x_i,y_i,t_i\in G^*$ and $u_i,v_i\in G$.
\end{proposition}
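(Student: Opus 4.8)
The plan is to argue by induction on $n$, using at each stage only the defining clause of the operator $F_{\xi}$ together with the recursion $\stackrel{n}{F}_{\xi}(H)=F_{\xi}\bigl(\stackrel{n-1}{F}_{\xi}(H)\bigr)$ recorded in Proposition \ref{P-10a}. Recall that, by definition, $z\in F_{\xi}(K)$ holds precisely when there exist $u,v,x,y,t$ with $u\downarrow v$, \ $(u\curlywedge v)x\boxdot y\leqslant zt$, and $u,vx\in K$. The whole proposition is nothing but the result of unrolling this single clause $n$ times, organised along a complete binary tree whose root carries the target $z$ and whose node $i$ passes its two obligations $u_i\in(\cdot)$ and $v_ix_i\in(\cdot)$ down to the children $2i$ and $2i+1$; the leaves, at the bottom level, are exactly the obligations that must land in $\stackrel{0}{F}_{\xi}(H)=H$.

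For the base case $n=2$ I would write $\stackrel{2}{F}_{\xi}(H)=F_{\xi}\bigl(F_{\xi}(H)\bigr)$ and apply the definition of $F_{\xi}$ three times: once for the outer membership $z\in F_{\xi}(F_{\xi}(H))$, producing $u_1,v_1,x_1,y_1,t_1$ with the root relation $(u_1\curlywedge v_1)x_1\boxdot y_1\leqslant zt_1$ and the two obligations $u_1,v_1x_1\in F_{\xi}(H)$; and then once for each of $u_1\in F_{\xi}(H)$ and $v_1x_1\in F_{\xi}(H)$, furnishing $u_2,v_2x_2\in H$ with $(u_2\curlywedge v_2)x_2\boxdot y_2\leqslant u_1t_2$ and $u_3,v_3x_3\in H$ with $(u_3\curlywedge v_3)x_3\boxdot y_3\leqslant v_1x_1t_3$. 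These are verbatim the conditions displayed for $n=2$.

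For the inductive step, assume the statement for some $n\geqslant2$ and consider $z\in\stackrel{n+1}{F}_{\xi}(H)=F_{\xi}\bigl(\stackrel{n}{F}_{\xi}(H)\bigr)$. The definition of $F_{\xi}$ supplies $u_1,v_1,x_1,y_1,t_1$ with $u_1\downarrow v_1$, \ $(u_1\curlywedge v_1)x_1\boxdot y_1\leqslant zt_1$, and the memberships $u_1,v_1x_1\in\stackrel{n}{F}_{\xi}(H)$. To each of these I apply the induction hypothesis, obtaining two depth-$n$ families of conditions, one certifying $u_1$ and one certifying $v_1x_1$. It then remains to relabel the indices of these families so that the subtree for $u_1$ is planted at node $2$ and the subtree for $v_1x_1$ at node $3$: a node at position $p$ of a subtree is sent to $p+2^{\lfloor\log_2 p\rfloor}$ under the left root and to $p+2^{1+\lfloor\log_2 p\rfloor}$ under the right root. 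One checks that these maps carry $2p\mapsto 2\,\phi(p)$ and $2p+1\mapsto 2\,\phi(p)+1$, so the parent--child structure, and hence every right-hand target $u_it_{2i}$, \ $v_ix_it_{2i+1}$, is preserved, while the subtree leaves at level $n-1$ land exactly at level $n$, filling the leaf indices $2^{n},\dots,2^{n+1}-1$ of the enlarged tree. With this relabelling the root relation together with the two embedded subtrees reproduces the system for $n+1$; conversely, any instance of that system splits at node $1$ into the root clause and two subtrees of the required form, whence $u_1,v_1x_1\in\stackrel{n}{F}_{\xi}(H)$ by the induction hypothesis and $z\in\stackrel{n+1}{F}_{\xi}(H)$.

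The only genuinely delicate point is this index bookkeeping: verifying that the address of each obligation, namely whether its target is built from the left child variable $u_i$ or the right child combination $v_ix_i$, is respected by the embedding, and that the two sets of leaf indices produced by the induction hypotheses assemble without overlap into the prescribed range. Everything else is a mechanical transcription of the definition of $F_{\xi}$, so no further appeal to the axioms \eqref{f-43}--\eqref{f-45} is needed beyond what already guaranteed the well-definedness of $F_{\xi}$ and $f_{\xi}$.
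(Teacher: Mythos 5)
Your proof is correct and follows exactly the route the paper intends: the paper omits the argument, remarking only that the proposition follows "by the method of mathematical induction," and your induction on $n$ --- unrolling the defining clause of $F_{\xi}$ via the recursion $\stackrel{n}{F}_{\xi}(H)=F_{\xi}\bigl(\stackrel{n-1}{F}_{\xi}(H)\bigr)$ of Proposition \ref{P-10a} along the binary tree of obligations --- is that argument, with the index bookkeeping checked carefully. Nothing further is needed.
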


In the sequel the system of the above conditions will be denoted
by $\frak{X}_n(z,H)$.

\bigskip

{\bf 5}. Let $(\Phi,\circ,\cap,\xi_{\Phi},\delta_{\Phi})$ be a
transformative $\cap$-semigroup of transformations with the
relation of semicompatibility $\xi_{\Phi}$ and the relation of
semiadjacency $\delta_{\Phi}$.

\begin{proposition}\label{P-11a}
$\bigcap\limits_{\varphi_i\in
H_{\Phi}}\!\!\!\pr\varphi_i\subset\pr\varphi$ \ for every
$H_{\Phi}\subset\Phi$ and $\varphi\in f_{\xi_{\Phi}}(H_{\Phi})$.
\end{proposition}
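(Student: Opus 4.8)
The strategy is to introduce the set
\[
K=\bigl\{\varphi\in\Phi \mid D\subset\pr\varphi\bigr\},\qquad
D=\bigcap_{\varphi_i\in H_\Phi}\pr\varphi_i,
\]
to show that $K$ is $f_{\xi_\Phi}$-closed, and then to invoke the minimality of $f_{\xi_\Phi}(H_\Phi)$. Since every $\varphi_i\in H_\Phi$ trivially satisfies $D\subset\pr\varphi_i$, we have $H_\Phi\subset K$; hence once $K$ is shown to be $f_{\xi_\Phi}$-closed it follows, $f_{\xi_\Phi}(H_\Phi)$ being the least $f_{\xi_\Phi}$-closed set containing $H_\Phi$, that $f_{\xi_\Phi}(H_\Phi)\subset K$, which is exactly the assertion. (If $H_\Phi=\varnothing$ then $f_{\xi_\Phi}(H_\Phi)=\varnothing$ and nothing is to be proved; otherwise $K\supset H_\Phi$ is non-empty and Proposition~\ref{P-p9a} applies to $K$.)

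To check the four closure conditions I first record how the relations read on $\Phi$: $f\leqslant g$ is inclusion of graphs, $f\vdash g$ means $\prr f\subset\pr g$, and $f\downarrow g$ means that $f$ and $g$ coincide on $\pr f\cap\pr g$; moreover I read the product as $fg=g\circ f$, the convention under which Lemma~\ref{P-dt-1} gives $f\vdash g\Leftrightarrow\pr f\subset\pr(fg)$ and under which $\delta_\Phi$ is a left ideal. This yields four elementary domain facts: (D1)~$\pr(fg)\subset\pr f$; (D2)~$f\vdash g\Rightarrow\pr(fg)=\pr f$ (by \eqref{f-dt-1} together with (D1)); (D3)~$\pr(f\cap g)\subset\pr f\cap\pr g$, with equality when $f\downarrow g$; and (D4)~$f\leqslant g\Rightarrow\pr f\subset\pr g$. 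Conditions \eqref{f-22}, \eqref{f-23}, \eqref{f-24} of Proposition~\ref{P-p9a} are then immediate for $K$: \eqref{f-22} follows from (D1), \eqref{f-23} from (D2), and \eqref{f-24} from (D4).

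The substantive step is condition \eqref{f-55}: assuming $g_1\downarrow g_2$ with $g_1\in K$ and $g_2x\in K$, I must prove $(g_1\cap g_2)x\in K$. Fix $a\in D$. Then $a\in\pr g_1$ (as $g_1\in K$), and $a\in\pr(g_2x)\subset\pr g_2$ with $g_2(a)\in\pr x$ (as $g_2x\in K$, using $\pr(g_2x)=\{b\in\pr g_2\mid g_2(b)\in\pr x\}$). Here the semicompatibility $g_1\downarrow g_2$ is decisive: since $a\in\pr g_1\cap\pr g_2$ it forces $g_1(a)=g_2(a)$, whence $a\in\pr(g_1\cap g_2)$ and $(g_1\cap g_2)(a)=g_2(a)\in\pr x$, i.e.\ $a\in\pr\bigl((g_1\cap g_2)x\bigr)$. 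Thus $D\subset\pr((g_1\cap g_2)x)$ and \eqref{f-55} holds. (When $x$ is the empty symbol this collapses to showing $a\in\pr(g_1\cap g_2)$, which the same agreement argument yields.)

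By Proposition~\ref{P-p9a} the set $K$ is therefore $f_{\xi_\Phi}$-closed, and the opening paragraph completes the proof. I expect \eqref{f-55} to be the only genuine obstacle, since it is the sole point where one needs $g_1\cap g_2$ to be actually defined on all of $D$: without the hypothesis $g_1\downarrow g_2$ the partial maps $g_1$ and $g_2$ could disagree on $\pr g_1\cap\pr g_2$, so that $\pr(g_1\cap g_2)$ might omit points of $D$. Should one prefer to avoid the characterization of closed sets, an equivalent route is to write $f_{\xi_\Phi}(H_\Phi)=\bigcup_{n}\stackrel{n}{F}_{\xi_\Phi}\!\!(H_\Phi)$ by Proposition~\ref{P-10a} and to prove $D\subset\pr\varphi$ for $\varphi\in\stackrel{n}{F}_{\xi_\Phi}\!\!(H_\Phi)$ by induction on $n$, unwinding the defining condition of $F_{\xi_\Phi}$; the inductive step rests on the very same facts (D1)--(D4) and the identical semicompatibility argument.
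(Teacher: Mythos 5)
Your proof is correct, and it is organized differently from the paper's. The paper proves the statement by induction on $n$ along the decomposition $f_{\xi_{\Phi}}(H_{\Phi})=\bigcup_{n}\stackrel{n}{F}_{\xi_{\Phi}}\!(H_{\Phi})$ of Proposition~\ref{P-10a}: it unwinds the full premise $u\downarrow v\wedge(u\curlywedge v)x\boxdot y\leqslant zt\wedge u,vx\in\stackrel{n}{F}_{\xi_{\Phi}}\!(H_{\Phi})$ in one stroke, computing with the restrictions $\triangle_{\spr u}\circ\triangle_{\frak{A}}$, etc., where $\frak{A}$ is your $D$. You instead exhibit the candidate set $K=\{\varphi\mid D\subset\pr\varphi\}$, verify that it is $f_{\xi_{\Phi}}$-closed via the four conditions \eqref{f-22}--\eqref{f-55} of Proposition~\ref{P-p9a}, and conclude by minimality of $f_{\xi_{\Phi}}(H_{\Phi})$; this bypasses Proposition~\ref{P-10a} and the induction entirely, and it isolates the one genuinely nontrivial point --- that $g_1\downarrow g_2$ forces $g_1$ and $g_2$ to agree on $\pr g_1\cap\pr g_2$, so that $g_1\cap g_2$ is defined on all of $D$ --- in the single condition \eqref{f-55}, the other three reducing to the elementary domain facts (D1)--(D4). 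Your reading of the abstract product as $fg=g\circ f$ is the correct one (it is what makes \eqref{f-dt-2} say that $\delta_{\Phi}$ is a left ideal), and your handling of the empty symbol $x=e$ and of $H_{\Phi}=\varnothing$ is fine. The trade-off is that the paper's inductive computation is self-contained and purely equational (it never needs the pointwise description of $\xi_{\Phi}$ as agreement on the common domain, only the identity $u\circ\triangle_{\spr v}=v\circ\triangle_{\spr u}$ composed with $\triangle_{\frak{A}}$), whereas your argument is shorter and makes the role of each closure condition transparent, at the cost of invoking Proposition~\ref{P-p9a} and arguing pointwise. Both are complete proofs.
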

\begin{proof}
First we show that the following implication
\begin{equation} \label{f-59}
 \varphi\in\stackrel{n}{F}_{\xi_{\Phi}}\!\!(H_{\Phi})\longrightarrow\bigcap
 \limits_{\varphi_i\in H_{\Phi}}\!\!\pr\varphi_i\subset\pr\varphi
\end{equation}
is valid for every integer $n$. We prove it by induction.

 Let $\,\frak{A}=\!\!\!\!\bigcap\limits_{\varphi_i\in H_{\Phi}}\!\!\pr\varphi_i$. If $n=0$ and
$\varphi\in\stackrel{0}{F}_{\xi_{\Phi}}\!(H_{\Phi})$, then clearly
$\varphi\in H_{\Phi}$. Thus $\frak{A}\subset\pr\varphi$, which
verifies \eqref{f-59} for $n=0$.

Assume now that \eqref{f-59} is valid for some $n>0$. To prove
that it is valid for $n+1$ consider an arbitrary transformation
$\varphi\in\stackrel{n+1}{F}\!\!\!_{\xi_{\Phi}} (H_{\Phi})$. Then,
for some transformations $x,y,t,u,v\in\Phi$, where $x,y,t$ may be
the empty symbols, we have $(u,v)\in\xi_{\Phi}$, $\,(x\circ (u\cap
v),y)\in\delta_{\Phi}$, $\,y\circ x\circ (u\cap v)\subset
t\circ\varphi$ and $u,x\circ v\in\stackrel{n}{F}_{\xi_{\Phi}}\!\!
(H_{\Phi})$. The last condition, according to the assumption on
$n$, implies $\frak{A}\subset\pr u$. Similarly, $\frak{A}\subset\pr
(x\circ v)\subset\pr v$. Consequently $\triangle_{\spr
u}\circ\triangle_{\frak{A}}=\triangle_{\frak{A}}$ and
$\triangle_{\spr v}\circ\triangle_{\frak{A}}=
\triangle_{\frak{A}}$.

From $(x\circ (u\cap v),y)\in\delta_{\Phi}$ it follows $\prr
(x\circ (u\cap v))\subset\pr y$, which, by \eqref{f-dt-1}, gives
$\pr (x\circ(u\cap v))\subset\pr (y\circ x\circ(u\cap
v))\subset\pr (t\circ\varphi)$. Then, $(u,v)\in\xi_{\Phi}$ means
that $u\circ\triangle_{\spr v} = v\circ\triangle_{\spr u}$. So,
$u\circ\triangle_{\spr v}\circ\triangle_{\frak{A}}=
v\circ\triangle_{\spr u}\circ\triangle_{\frak{A}}$, hence
$u\circ\triangle_{\frak{A}}=v\circ\triangle_{\frak{A}}=
u\circ\triangle_{\frak{A}}\cap v\circ\triangle_{\frak{A}}=(u\cap
v)\circ\triangle_{\frak{A}}$. Since $\frak{A}\subset\pr (x\circ
v)$, we have
\[\arraycolsep=.5mm
\begin{array}{rll}
 \frak{A}&\subset\pr (x\circ v\circ\triangle_{\frak{A}})=\pr (x\circ(u\cap v)\circ
 \triangle_{\frak{A}})\subset \pr (y\circ x\circ (u\cap v)\circ\triangle_{\frak{A}})\nonumber\\[4pt]
 & \subset\pr (t\circ\varphi\circ\triangle_{\frak{A}})\subset\pr(\varphi\circ
 \triangle_{\frak{A}})
 =\pr(\varphi\circ\triangle_{\spr\varphi}\circ\triangle_{\frak{A}})
 \nonumber\\[4pt]
&=\pr(\varphi\circ\triangle_{\frak{A}}\circ\triangle_{\spr\varphi})\subset\pr\varphi.\nonumber
\end{array}\]
Thus, $\frak{A}\subset\pr\varphi$. This shows that \eqref{f-59} is
valid for $n+1$. Consequently, \eqref{f-59} is valid for all
integers $n$.

To complete the proof of this proposition observe now that,
according to \eqref{f-57}, for every $\varphi\in
f_{\xi_{\Phi}}(H_{\Phi})$ there exists $n$ such that
$\varphi\in\stackrel{n}{F}_{\xi_{\Phi}}\!(H_{\Phi})$, which, by
\eqref{f-59}, gives $\bigcap\limits_{\varphi_i\in
H_{\Phi}}\!\!\pr\varphi_i\subset\pr\varphi$.
\end{proof}

\begin{theorem}\label{T-6a}
An algebraic system $(G,\cdot,\curlywedge,\xi,\delta)$, where
$(G,\cdot)$ is a semigroup, $(G,\curlywedge)$ is a semilattice,
$\xi,\delta$ are binary relations on $G$, is isomorphic to some
transformative $\cap$-semigroup of transformations
$(\Phi,\circ,\cap,\xi_{\Phi},\delta_{\Phi})$ if and only if
$\,\xi$ is a left regular relation containing a semilattice order
$\zeta$, $\delta$ is a left ideal relation on $(G,\cdot)$, the conditions
$\eqref{f-43}$, $\eqref{f-44}$, $\eqref{f-45}$, as well as the
conditions:
\begin{eqnarray}
& &\label{f-65} x\curlywedge y\in f_{\xi}(\{x\})\longrightarrow
x\zeta y,\\[4pt]
&&\label{f-66} x\curlywedge y\in f_{\xi}(\{x,y\})\longrightarrow
x\xi y,\\[4pt]
&&\label{f-67} xy\in f_{\xi}(\{x\})\longrightarrow x\delta y
\end{eqnarray}
are satisfied by all elements of $G$.
\end{theorem}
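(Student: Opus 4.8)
My plan is to prove the two implications separately: necessity is a routine transport of set-theoretic facts into a concrete $\cap$-semigroup, while sufficiency requires constructing a faithful representation, and that is where the machinery of Section~4 is spent.

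\textbf{Necessity.} Transporting everything along the isomorphism, I work inside a genuine $(\Phi,\circ,\cap,\xi_\Phi,\delta_\Phi)$. That $\xi_\Phi$ is left regular and contains the inclusion order $\zeta$, and that $\delta_\Phi$ is a left ideal, is immediate from the definitions together with \eqref{f-dt-2} of Lemma~\ref{P-dt-1}; the identity \eqref{f-43} is the always-valid left distributivity of composition over intersection, \eqref{f-45} its right distributivity under semicompatibility, both checked pointwise on $A$; and \eqref{f-44} is a direct verification on domains and values of partial maps. The three closure conditions \eqref{f-65}--\eqref{f-67} are exactly where Proposition~\ref{P-11a} enters. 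For instance, if $\varphi\cap\psi\in f_{\xi_\Phi}(\{\varphi\})$, then Proposition~\ref{P-11a} yields $\pr\varphi\subset\pr(\varphi\cap\psi)$; since the reverse inclusion always holds, $\pr(\varphi\cap\psi)=\pr\varphi$, whence $\varphi\subset\psi$, i.e. $\varphi\leqslant\psi$, giving \eqref{f-65}. Reading the resulting domain equalities and inclusions through \eqref{f-dt-1} and the definitions of $\xi_\Phi$ and $\delta_\Phi$ produces \eqref{f-66} (semicompatibility) and \eqref{f-67} (semiadjacency) in the same fashion.

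\textbf{Sufficiency.} Here I would build the desired representation as a sum $P=\sum_{j}P_{(\varepsilon_j,W_j)}$ of simplest representations, the determining pairs being manufactured from the $f_\xi$-closed subsets of $G$ supplied by Section~4: the carrier of each $P_{(\varepsilon_j,W_j)}$ is formed from $f_\xi$-closed sets, the accompanying right-regular equivalence $\varepsilon_j$ is defined through the closure operator $f_\xi$, and the right ideal $W_j$ is chosen to make the pair admissible. The first task is to carry $\curlywedge$ to $\cap$: I would check that every pair in the family satisfies conditions \eqref{f-6}--\eqref{f-8}, so that Proposition~\ref{P-p2} gives $P_{(\varepsilon_j,W_j)}(g_1\curlywedge g_2)=P_{(\varepsilon_j,W_j)}(g_1)\cap P_{(\varepsilon_j,W_j)}(g_2)$ for each $j$, and hence, on passing to the sum, $P(g_1\curlywedge g_2)=P(g_1)\cap P(g_2)$. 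The structural inputs here are the stability of $\zeta$ and the reflexivity and symmetry of $\xi$ from Proposition~\ref{P-TVS}, together with the closure description of Proposition~\ref{P-p9a}.

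\textbf{Identifying the relations and faithfulness.} By \eqref{f-dt-4}, $\zeta_P=\bigcap_j\zeta_{(\varepsilon_j,W_j)}$ and likewise for $\xi_P$ and $\delta_P$, while Proposition~\ref{P-dt-2} rewrites each factor in purely multiplicative terms. The inclusions $\zeta\subset\zeta_P$, $\xi\subset\xi_P$, $\delta\subset\delta_P$ follow from the compatibility of the chosen pairs with the given relations, i.e. from the left regularity of $\xi$, the left-ideal property of $\delta$, and the stability of $\zeta$. The reverse inclusions, together with faithfulness, are where \eqref{f-65}--\eqref{f-67} do their work: given $(g_1,g_2)\notin\zeta$ (respectively $\notin\xi$, $\notin\delta$) one must exhibit a single index $j$ at which the corresponding representation relation fails, which amounts to showing that a certain element does \emph{not} lie in the $f_\xi$-closure defining the separating pair. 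Conditions \eqref{f-65}, \eqref{f-66}, \eqref{f-67} are precisely the contrapositives guaranteeing this, the membership being analysed through the expansion $f_\xi(H)=\bigcup_n\stackrel{n}{F}_{\xi}\!(H)$ of Proposition~\ref{P-10a} and the explicit system $\frak{X}_n(z,H)$ of Proposition~\ref{P-10ab}.

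\textbf{Main obstacle.} I expect the delicate point to be this last step: establishing the three reverse inclusions and faithfulness simultaneously from one family of determining pairs that also respects $\curlywedge$. The separating pair witnessing, say, a failure of semiadjacency must remain admissible and $f_\xi$-related (so as not to disturb $\curlywedge\mapsto\cap$) yet be fine enough to detect that failure, and the only leverage for the latter is the combinatorial membership criterion $\frak{X}_n(z,H)$ controlled by \eqref{f-65}--\eqref{f-67}. Managing this interplay between the iterated closure $F_\xi$ on one side and the three abstract conditions on the other is the technical heart of the argument, and is the reason the apparatus of Section~4 was developed in advance.
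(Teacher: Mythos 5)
Your outline coincides with the paper's proof in both directions: necessity is handled exactly as you describe, with Proposition~\ref{P-11a} converting membership in the closures $f_{\xi_{\Phi}}(\{\varphi\})$ and $f_{\xi_{\Phi}}(\{\varphi,\psi\})$ into domain inclusions that yield \eqref{f-65}--\eqref{f-67}, and sufficiency proceeds by summing simplest representations whose determining pairs are built from $f_{\xi}$-closures, with Proposition~\ref{P-p2} carrying $\curlywedge$ to $\cap$ and Proposition~\ref{P-dt-2} together with \eqref{f-dt-4} identifying the relations. The one ingredient you leave unspecified --- and rightly flag as the technical heart --- is the explicit family of determining pairs; the paper's choice is to index them by \emph{all} pairs $(g_1,g_2)\in G\times G$, taking $W_{(g_1,g_2)}=G\setminus f_{\xi}(\{g_1,g_2\})$ and $\varepsilon_{(g_1,g_2)}=\{(x,y)\mid x\curlywedge y\in f_{\xi}(\{g_1,g_2\})\ \vee\ x,y\notin f_{\xi}(\{g_1,g_2\})\}$ augmented by $(e,e)$, the required properties (that $W_{(g_1,g_2)}$ is a right ideal and an $\varepsilon$-class, that $\varepsilon_{(g_1,g_2)}$ is a right regular equivalence, and that \eqref{f-6}--\eqref{f-8} hold) all following from Proposition~\ref{P-p9a} and \eqref{f-44}. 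With that choice the tension you anticipate between admissibility and separating power disappears: the reverse inclusions are obtained by evaluating the intersected conditions of Proposition~\ref{P-dt-2} at the single index $(x,y)$ (respectively $(x,x)$) with $u=e$, which yields $x\curlywedge y\in f_{\xi}(\{x,y\})$, $x\curlywedge y\in f_{\xi}(\{x\})$, or $xy\in f_{\xi}(\{x\})$, whence \eqref{f-66}, \eqref{f-65}, \eqref{f-67} give $\xi_P\subset\xi$, $\zeta_P\subset\zeta$ (hence injectivity), and $\delta_P\subset\delta$ respectively. So the plan is sound and is the paper's; what remains is to write down this family explicitly and carry out the routine verifications you defer.
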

\begin{proof} {\sc Necessity}.
Let $(\Phi,\circ,\cap,\xi_{\Phi},\delta_{\Phi})$ be a transformative
$\cap$-semigroup of transformations of some set. We show that it
satisfies all the conditions of our theorem.

The necessity of $\eqref{f-43}$ is a consequence of results proved
in \cite{Clif} and \cite{Garv}. Since the order $\zeta_{\Phi}$ of
a semilattice $(\Phi,\cap)$ coincides with the inclusion,
$\zeta_{\Phi}$ is contained in $\xi_{\Phi}$. From \eqref{f-dt-2}
(Lemma \ref{P-dt-1}) it follows that $\delta_{\Phi}$ is a left
ideal relation.

Let $(f,g)\in\xi_{\Phi}$, i.e., $f\circ\Delta_{\spr g}=
g\circ\Delta_{\spr f}$. Then $f\circ\Delta_{\spr g}\circ h =
g\circ\Delta_{\spr f}\circ h$. Since $\Delta_{\spr g}\circ h =
h\circ\Delta_{\spr g\circ h}$ and $\Delta_{\spr f}\circ h =
h\circ\Delta_{\spr f\circ h}$, we have $f\circ h\circ\Delta_{\spr
g\circ h} = g\circ h\circ\Delta_{\spr f\circ h}$, which proves
$(f\circ h, g\circ h)\in\xi_{\Phi}$. Thus, $\xi_{\Phi}$ is left
regular.

If $f\subset g$, $h\subset p$ and $(g,p)\in\xi_{\Phi}$ for some
$f,g,h,p\in\Phi$, then $f=g\circ\Delta_{\spr f}$,
$h=p\circ\Delta_{\spr h}$ and $g\circ\Delta_{\spr p} =
p\circ\Delta_{\spr g}$. The last equality implies
$g\circ\Delta_{\spr p}\circ\Delta_{\spr f}\circ\Delta_{\spr h} =
p\circ\Delta_{\spr g}\circ\Delta_{\spr f}\circ\Delta_{\spr h}$.
Thus, $p\circ\Delta_{\spr h}\circ\Delta_{\spr g}\circ\Delta_{\spr
f} = g\circ\Delta_{\spr f}\circ\Delta_{\spr p}\circ\Delta_{\spr
h}$. Consequently, $h\circ\Delta_{\spr g}\circ\Delta_{\spr f} =
f\circ\Delta_{\spr p}\circ\Delta_{\spr h}$, which in view of
$\Delta_{\spr g}\circ\Delta_{\spr f} =\Delta_{\spr f}$ and
$\Delta_{\spr p}\circ\Delta_{\spr h}=\Delta_{\spr h}$ gives
$h\circ\Delta_{\spr f} = f\circ\Delta_{\spr h}$. Therefore, $(h,
f)\in\xi_{\Phi}$. So, \eqref{f-44} is satisfied.

To prove \eqref{f-45} let $(f,g)\in\xi_{\Phi}$, i.e.,
$f\circ\Delta_{\spr g}=g\circ\Delta_{\spr f}$. Since $f\cap g
=(f\cap g)\circ\Delta_{\spr g} = f\circ\Delta_{\spr g}\cap g =
g\circ \Delta_{\spr f}\cap g = g\circ\Delta_{\spr f}\,(=
f\circ\Delta_{\spr g}),$ we have $h\circ (f\cap g) = h\circ
f\circ\Delta_{\spr g}\cap h\circ g\circ\Delta_{\spr f} = (h\circ
f\cap h\circ g)\circ\Delta_{\spr g}\circ\Delta_{\spr f} = h\circ
f\circ\Delta_{\spr f}\cap h\circ g\circ\Delta_{\spr g} = h\circ
f\cap h\circ g.$ Thus $h\circ (f\cap g)=h\circ f\cap h\circ g$,
which proves \eqref{f-45}.

Now let $\varphi\cap\psi\in f_{\xi_{\Phi}}(\{\varphi\})$ for some
$\varphi,\psi\in\Phi$. Then
$\pr\varphi\subset\pr(\varphi\cap\psi)$, by Proposition
\ref{P-11a}. Hence $\pr (\varphi\cap\psi) =\pr\varphi$ since
$\pr(\varphi\cap\psi)\subset\pr\varphi$. Thus
$\varphi=\varphi\circ\bigtriangleup_{\spr\varphi}
=\varphi\circ\bigtriangleup_{\spr(\varphi\cap\psi)}
=\varphi\cap\psi\subset\psi$. This proves \eqref{f-65}, because
the inclusion $\subset$ coincides with the order $\zeta_{\Phi}$ of
the semilattice $(\Phi,\cap)$.

If $\varphi\cap\psi\in f_{\xi_{\Phi}}(\{\varphi,\psi\})$, then, by
Proposition \ref{P-11a},
$\pr\varphi\cap\pr\psi\subset\pr(\varphi\cap\psi)$, which together
with the obvious inclusion
$\pr(\varphi\cap\psi)\subset\pr\varphi\cap\pr\psi$ gives
$\pr(\varphi\cap\psi) =\pr\varphi\cap\pr\psi$. So,
$\varphi\circ\bigtriangleup_{\spr\psi}
=\varphi\circ\bigtriangleup_{\spr\varphi}\circ\bigtriangleup_{\spr\psi}
=\varphi\circ\bigtriangleup_{\spr\varphi\cap\,\spr\psi}
=\varphi\circ\bigtriangleup_{\spr(\varphi\cap \,\psi)}
=\varphi\cap\psi =\psi\circ\bigtriangleup_{\spr(\varphi\cap
\,\psi)} =\psi\circ\bigtriangleup_{\spr\psi\cap \,\spr\varphi}
=\psi\circ\bigtriangleup_{\spr\psi}\circ\bigtriangleup_{\spr\varphi}
=\psi\circ\bigtriangleup_{\spr\varphi}$. Thus
$\varphi\circ\bigtriangleup_{\spr\psi}
=\psi\circ\bigtriangleup_{\spr\varphi}$, i.e.,
$(\varphi,\psi)\in\xi_{\Phi}$. This proves \eqref{f-66}.

To prove the last condition let $\psi\circ\varphi\in
f_{\xi_{\Phi}}(\{\varphi\})$. Then
$\pr\varphi\subset\pr(\psi\circ\varphi)$, which by \eqref{f-dt-1},
gives $(\varphi,\psi)\in\delta_{\Phi}$. This means that
\eqref{f-67} also is satisfied.

\medskip

{\sc Sufficiency}. Let $(G,\cdot,\curlywedge,\xi,\delta)$ be an
algebraic system satisfying all the conditions of the theorem.
Then, by Proposition \ref{P-TVS}, $\xi$ is a reflexive and
symmetric relation, and $\zeta$ is stable in the semigroup
$(G,\cdot)$. Moreover, the implication
\begin{equation} \label{f-dt-trs}
  (g_1,g_2)\in\zeta\wedge g_1\in f_{\xi}(\{x,y\})\longrightarrow
  g_2\in f_{\xi}(\{x,y\})
\end{equation}
is valid for all $g_1,g_2,x,y\in G$. In fact, the premise of
\eqref{f-dt-trs} can be rewritten in the form:
\[
(g_1,g_1)\in\xi\wedge (g_1\curlywedge g_1) e\boxdot e\zeta
g_2e\wedge g_1, g_1e\in f_{\xi}(\{x,y\}).
\]
So, if it is satisfied, then, according to the definition of
$F_{\xi}(H)$ and Lemma \ref{Lem}, $g_2\in
F_{\xi}(f_{\xi}(\{x,y\}))= f_{\xi}(\{x,y\})$, which proves
\eqref{f-dt-trs}.

Now we show that for every $x,y\in G$ the subset $G\setminus
f_{\xi}(\{x,y\})$ is a right ideal of the semigroup $(G,\cdot)$.
Indeed, if $gu\in f_{\xi}(\{x,y\})$, then, by \eqref{f-57}, for
some natural $n$ we have $gu\in\stackrel{n}{F}_{\xi}(\{x,y\})$.
Hence
\[
(gu,gu)\in\xi\wedge (gu\curlywedge gu) e\boxdot e\zeta
gu\wedge gu, gue\in\stackrel{n}{F}_{\xi}(\{x,y\}),
\]
so, $g\in\stackrel{n+1}{F}_{\!\!\xi}\!\!(\{x,y\})\subset f_{\xi}
(\{x,y\})$. Thus, $g\in f_{\xi}(\{x,y\})$. In this way we have
shown the implication $gu\in f_{\xi}(\{x,y\})\longrightarrow g\in
f_{\xi}(\{x,y\})$, which by the contraposition is equivalent to
the implication $g\not\in f_{\xi}(\{x,y\})\longrightarrow
gu\not\in f_{\xi}(\{x,y\})$. The last implication means that
$G\setminus f_{\xi}(\{x,y\})$ is a right ideal.

If $(u,v)\in\xi$ for $u,v\in f_{\xi}(\{x,y\})$, then, obviously,
\[
(u,v)\in\xi\wedge (u\curlywedge v) e\delta e\wedge (u\curlywedge
v) ee\zeta (u\curlywedge v) e\wedge u, ve\in f_{\xi}(\{x,y\}).
\]
Thus $u\curlywedge v\in F_{\xi}(f_{\xi}(\{x,y\}))=
f_{\xi}(\{x,y\})$, since the set $f_{\xi}(\{x,y\})$ is
$f_{\xi}$-closed. So, $f_{\xi}(\{x,y\})$ satisfies the implication
\begin{equation} \label{f-68}
  (u,v)\in\xi\wedge u, v\in f_{\xi}(\{x,y\})\longrightarrow
  u\curlywedge v\in f_{\xi}(\{x,y\}).
\end{equation}

We show now that the relation
\[
\varepsilon_{(g_1,g_2)}=\{(x,y)\,|\,x\curlywedge y\in
f_{\xi}(\{g_1,g_2\})\, \vee\, x,y\not\in f_{\xi}(\{g_1,g_2\})\}
\]
defined on the semigroup $(G,\cdot)$ is a right regular equivalence and
$G\setminus f_{\xi}(\{g_1,g_2\})$ is an equivalence class.

The reflexivity and symmetry of $\varepsilon_{(g_1, g_2)}$ are
obvious. To prove the transitivity let
$(x,y),(y,z)\in\varepsilon_{(g_1,g_2)}$. If $x,y,z\not\in f_{\xi}
(\{g_1,g_2\})$, then clearly $(x,z)\in\varepsilon_{(g_1,g_2)}$. In
the case $x\curlywedge y\in f_{\xi}(\{g_1,g_2\})$ from
$x\curlywedge y\zeta y$, by \eqref{f-dt-trs}, we conclude
$y\in f_{\xi}(\{g_1,g_2\})$. Therefore $x,z\in f_{\xi}(\{g_1,
g_2\})$. Consequently, $x\curlywedge y,\, y\curlywedge z\in
f_{\xi}(\{g_1,g_2\})$. But $(x\curlywedge y)\zeta y$,
$(y\curlywedge z)\zeta y$ and $y\xi y$, hence the last,
by \eqref{f-44}, implies $(x\curlywedge y)\xi (y\curlywedge
z)$. From this, applying \eqref{f-68}, we deduce $x\curlywedge
y\curlywedge z\in f_{\xi}(\{g_1,g_2\})$. On the other hand
$(x\curlywedge y \curlywedge z)\zeta (x\curlywedge z)$ for all
$x,y,z\in G.$ So, $x\curlywedge y\curlywedge z\in
f_{\xi}(\{g_1,g_2\})$, according to \eqref{f-dt-trs}, implies
$x\curlywedge z\in f_{\xi}(\{g_1,g_2\})$. Hence
$(x,z)\in\varepsilon_{(g_1,g_2)}$. This proves the transitivity of
$\varepsilon_{(g_1,g_2)}$. Summarizing  $\varepsilon_{(g_1,g_2)}$
is an equivalence relation.

If $x,y\in G\setminus f_{\xi}(\{g_1,g_2\})$, then
$(x,y)\in\varepsilon_{(g_1,g_2)}$. This means that a subset
$G\setminus f_{\xi}(\{g_1,g_2\})$ is contained in some
$\varepsilon_{(g_1, g_2)}$-class. Now let $x\in G\setminus
f_{\xi}(\{g_1,g_2\})$ and $(x,y)\in\varepsilon_{(g_1,g_2)}$. The
case $x\curlywedge y\in f_{\xi}(\{g_1,g_2\})$ is impossible,
because in this case $x\in f_{\xi}(\{g_1,g_2\})$. So, $y\not\in
f_{\xi}(\{g_1,g_2\})$, i.e., $y\in G\setminus
f_{\xi}(\{g_1,g_2\})$. Hence $G\setminus f_{\xi}(\{g_1,g_2\})$
coincides with some $\varepsilon_{(g_1,g_2)}$-class.

To prove that the relation $\varepsilon_{(g_1,g_2)}$ is right
regular let $(x,y)\in\varepsilon_{(g_1,g_2)}$. If $x,y\in
G\setminus f_{\xi}(\{g_1,g_2\})$, then $xz,yz\in G\setminus
f_{\xi}(\{g_1,g_2\})$ since $G\setminus f_{\xi}(\{g_1,g_2\})$ is a
right ideal. Thus $(xz,yz)\in f_{\xi}(\{g_1,g_2\})$. Now if
$x\curlywedge y\in f_{\xi}(\{g_1,g_2\})$ and $xz\in
f_{\xi}(\{g_1,g_2\})$. Then
\[
(x\curlywedge y,x)\in\xi\wedge (x\curlywedge y) z\delta
e\wedge (x\curlywedge y) ze\zeta (x\curlywedge y) ze\wedge (x
\curlywedge y), xz\in f_{\xi}(\{g_1,g_2\}),
\]
whence, by \eqref{f-54}, we obtain $(x\curlywedge y)z\in
f_{\xi}(\{g_1,g_2\})$. But $(x\curlywedge y) z\zeta yz$, hence
$yz\in f_{\xi}(\{g_1,g_2\})$. Similarly, from $x\curlywedge y\in
f_{\xi}(\{g_1,g_2\})$ and $yz\in f_{\xi}(\{g_1,g_2\})$ we get
$xz\in f_{\xi}(\{g_1,g_2\})$. So, if $x\curlywedge y\in
f_{\xi}(\{g_1,g_2\})$, then $xz$, $yz$ belong or do not belong to
$f_{\xi}(\{g_1,g_2\})$ simultaneously. If $xz,yz\not\in
f_{\xi}(\{g_1,g_2\})$, then obviously,
$(xz,yz)\in\varepsilon_{(g_1,g_2)}$. If $xz,yz\in
f_{\xi}(\{g_1,g_2\})$, then, as was shown above, from
$x\curlywedge y\in f_{\xi}(\{g_1,g_2\})$ it follows $(x\curlywedge
y)z\in f_{\xi}(\{g_1,g_2\})$. Since $(x\curlywedge y)z\zeta
xz$ and $(x\curlywedge y) z\zeta yz$, then obviously
$(x\curlywedge y)z\zeta (xz\curlywedge yz)$. Hence
$xz\curlywedge yz\in f_{\xi}(\{g_1,g_2\})$, i.e.,
$(xz,yz)\in\varepsilon_{(g_1,g_2)}$. So, in any case
$(x,y)\in\varepsilon_{(g_1,g_2)}$ implies
$(xz,yz)\in\varepsilon_{(g_1,g_2)}$. This proves that
$\varepsilon_{(g_1,g_2)}$ is right regular.

From what was just shown, it follows that the pair
$(\varepsilon^*_{(g_1,g_2)},W_{(g_1,g_2)})$, where
$$
\varepsilon^*_{(g_1,g_2)}=\varepsilon_{(g_1,g_2)}\cup\{(e,e)\},\quad\quad
W_{(g_1,g_2)}=G\setminus f_{\xi}(\{g_1,g_2\}),
$$
is a determining pair of the semigroup $(G,\cdot)$.

Let
$\big(P_{(\varepsilon^*_{(g_1,g_2)},W_{(g_1,g_2)})}\big)_{(g_1,g_2)\in
G\times G}$ be the family of simplest representations of the
semigroup $(G,\cdot)$. Their sum
\begin{equation} \label{f-52a}
P=\sum\limits_{(g_1,g_2)\in G\times G}\!\!\!
P_{(\varepsilon^*_{(g_1,g_2)},W_{(g_1,g_2)})}
\end{equation}
is a representation of $(G,\cdot)$ by transformations. It is easy
to see that the above determining  pairs satisfy \eqref{f-6},
\eqref{f-7} and \eqref{f-8}. Therefore, by Proposition \ref{P-p2},
we have
\[
P_{(\varepsilon^*_{(g_1,g_2)},W_{(g_1,g_2)})}(x\curlywedge y)=
P_{(\varepsilon^*_{(g_1,g_2)},W_{(g_1,g_2)})}(x)\cap
P_{(\varepsilon^*_{(g_1,g_2)},W_{(g_1,g_2)})}(y)
\]
for all $g_1,g_2\in G$. Hence $P(x\curlywedge y)=P(x)\cap P(y)$
for $x,y\in G$. Thus, $P$ is the homomorphism of the algebra
$(G,\cdot,\curlywedge)$ onto the $\cap$-semigroup
$(\Phi,\circ,\cap)$, where $\Phi = P(G)$.

Now we prove that $\xi=\xi_{P}$ and $\delta=\delta_{P}$. In fact,
according to \eqref{f-dt-4} and \eqref{f-dt-7} we have
\begin{eqnarray}
& & (x,y)\in\xi_P\;\;\longleftrightarrow\!\!\!
\bigcap\limits_{(g_1, g_2)\in G\times G}\!\!\!
\xi_{(\varepsilon^*_{(g_1,g_2)},W_{(g_1,g_2)})}\;\longleftrightarrow
\nonumber\\
 & & (\forall g_1)(\forall g_2) (\forall u\in G^*)\big(ux,uy\in f_{\xi}(\{g_1,g_2\})
\longrightarrow ux\curlywedge uy\in
f_{\xi}(\{g_1,g_2\})\big).\nonumber
\end{eqnarray}
The last implication for $u=e$ and $g_1=x$, $g_2=y$ has the form
\[
x,y\in f_{\xi}(\{x,y\})\longrightarrow x\curlywedge y\in
f_{\xi}(\{x,y\}).
\]
Thus $x\curlywedge y\in f_{\xi}(\{x,y\})$. Hence, by \eqref{f-66},
we obtain $x\xi y$. This proves $\xi_P\subset\xi$.

To prove the converse inclusion let $(x,y)\in\xi$. If $ux,uy\in
f_{\xi}(\{g_1,g_2\})$ for some $u\in G^*$ and $g_1,g_2\in G$, then
from $(x,y)\in\xi$, by the left regularity of $\xi$, we obtain
$(ux,uy)\in\xi$, which by \eqref{f-68} implies $ux\curlywedge
uy\in f_{\xi}(\{g_1,g_2\})$. Therefore
$(ux,uy)\in\xi_{(\varepsilon^*_{(g_1,g_2)},W_{(g_1,g_2)})}$. Thus
$(x,y)\in\!\!\bigcap\limits_{(g_1,g_2)\in G\times G}\!\!
\xi_{(\varepsilon^*_{(g_1,g_2)},W_{(g_1,g_2)})}=\xi_P$. So,
$\xi\subset\xi_P$. Consequently, $\xi=\xi_P$.

Now if $(x,y)\in\delta$ and $ux\in f_{\xi}(\{g_1,g_2\})$ for some
$g_1,g_2\in G$ and $u\in G^*$, then also $(ux,y)\in\delta$ because
$\delta$ is a left ideal of $(G,\cdot)$. Since
$f_{\xi}(\{g_1,g_2\})$ is $f_{\xi}$-closed, $(ux,y)\in\delta$
together with $ux\in f_{\xi}(\{g_1,g_2\})$, according to
\eqref{f-23}, imply $uxy\in f_{\xi}(\{g_1,g_2\})$. Thus
$(x,y)\in\delta_{(\varepsilon^*_{(g_1,g_2)},W_{(g_1,g_2)})}$.
Hence $(x,y)\in\!\!\bigcap\limits_{(g_1,g_2)\in G\times G}\!\!\!
\delta_{(\varepsilon^*_{(g_1,g_2)},W_{(g_1,g_2)})}= \delta_P$.
This proves $\delta\subset\delta_P$.

Conversely, let $(x,y)\in\delta_P$. Then, in view of
\eqref{f-dt-4} and \eqref{f-dt-8}, we have
\[
  (\forall g_1)\big(\forall g_2) (\forall u\in G^*) (ux\in f_{\xi}(\{g_1,g_2\})\longrightarrow
  uxy\in f_{\xi}(\{g_1,g_2\})\big),
\]
which for $u=e$ and $g_1=g_2=x$ has the form
$$
x\in f_{\xi}(\{x\})\longrightarrow xy\in f_{\xi}(\{x\}).
$$
Thus $xy\in f_{\xi}(\{x\})$. This, by \eqref{f-67}, implies
$(x,y)\in\delta$. So, $\delta_P\subset\delta$, and consequently,
$\delta_P=\delta$.

In this way we have shown that $P$ is a homomorphism of
$(G,\cdot,\curlywedge,\xi,\delta)$ onto the $\cap$-semigroup
$(\Phi,\circ,\cap,\xi_{\Phi},\delta_{\Phi})$, where $\Phi=P(G)$.

It is also an isomorphism. To prove this fact observe first that
$\zeta_P\subset\zeta$. Indeed, according to \eqref{f-dt-4} and
\eqref{f-dt-6}, we have:
\begin{eqnarray}
&&(x,y)\in\zeta_P\;\longleftrightarrow\!\!\!\bigcap\limits_{(g_1,
g_2)\in G\times G}\!\!\!
\zeta_{(\varepsilon^*_{(g_1,g_2)},W_{(g_1,g_2)})})\;\longleftrightarrow
\nonumber\\
&&(\forall g_1) (\forall g_2) (\forall u\in G^*)\big(ux\in f_{\xi}
(g_1,g_2) \longrightarrow ux\curlywedge uy\in
f_{\xi}(\{g_1,g_2\})\big).\nonumber
\end{eqnarray}
Putting in the last implication $u=e$ and $g_1=g_2=x$ we obtain
$$
x\in f_{\xi}(\{x\})\longrightarrow x\curlywedge y\in
f_{\xi}(\{x\}).
$$
So, $x\curlywedge y\in f_{\xi}(\{x\})$. This, by \eqref{f-65},
gives $x\zeta y$, i.e., $(x,y)\in\zeta.$ Hence
$\zeta_P\subset\zeta$.

Now let $P(g_1)=P(g_2)$. Then $P(g_1)\subset P(g_2)$ and
$P(g_2)\subset P(g_1)$. Hence $(g_1,g_2)\in\zeta_P$ and
$(g_2,g_1)\in\zeta_P$. This implies 
$(g_1,g_2),(g_2,g_1)\in\zeta$. Thus $g_1=g_2$ because $\zeta$ is a
semilattice order. So, $P$ is a isomorphism between
$(G,\cdot,\curlywedge,\xi,\delta)$ and
$(\Phi,\circ,\cap,\xi_{\Phi},\delta_{\Phi})$.
\end{proof}

Now, using \eqref{f-57} and the formula $\frak{X}_n(z,H)$ from our
Proposition \ref{P-10ab} we can write the conditions \eqref{f-65},
\eqref{f-66} and \eqref{f-67} in the form of systems of elementary
axioms $(A_n)_{n\in\mathbb{N}}$, $(B_n)_{n\in\mathbb{N}}$ and
$(C_n)_{n\in\mathbb{N}}$, respectively, where
\begin{eqnarray}
&&A_n\colon\,\frak{X}_n(x\curlywedge y,\{x\})\longrightarrow x\curlywedge y = x,\nonumber\\
&&B_n\colon\,\frak{X}_n(x\curlywedge y,\{x,y\})\longrightarrow (x,y)\in\xi,\nonumber\\
&&C_n\colon \,\frak{X}_n(xy,\{x\})\longrightarrow
(x,y)\in\delta.\nonumber
\end{eqnarray}
Thus, we have proved the following theorem:

\begin{theorem}\label{T-7ab}
An algebraic system $(G,\cdot,\curlywedge,\xi,\delta)$, where
$(G,\cdot)$ is a semigroup, $(G,\curlywedge)$ is a semilattice,
$\xi,\delta$ are binary relations on $G$, is isomorphic to some
transformative $\cap$-semigroup of transformations
$(\Phi,\circ,\cap,\xi_{\Phi},\delta_{\Phi})$ if and only if
$\,\xi$ is a left regular relation containing a semilattice order
$\zeta$, $\delta$ is a left ideal relation on $(G,\cdot)$, the conditions
$\eqref{f-43}$, $\eqref{f-44}$, $\eqref{f-45}$, as well as the
axioms systems $(A_n)_{n\in\mathbb{N}}$, $(B_n)_{n\in\mathbb{N}}$
and $(C_n)_{n\in\mathbb{N}}$ are satisfied by all elements of $G$.
\end{theorem}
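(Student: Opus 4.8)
The plan is to obtain Theorem~\ref{T-7ab} as an immediate reformulation of Theorem~\ref{T-6a}: all hypotheses except the three conditions \eqref{f-65}, \eqref{f-66}, \eqref{f-67} are literally the same in both statements, so it suffices to show that each of these three conditions is equivalent (under the standing assumptions) to the corresponding system of axioms $(A_n)_{n\in\mathbb{N}}$, $(B_n)_{n\in\mathbb{N}}$, $(C_n)_{n\in\mathbb{N}}$. Once this is done the two theorems assert the same characterization and nothing further is required.

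The mechanism is to unfold the closure operator $f_{\xi}$ by its layered description. By \eqref{f-57} of Proposition~\ref{P-10a}, membership $z\in f_{\xi}(H)$ is equivalent to $(\exists n)\,z\in\stackrel{n}{F}_{\xi}(H)$, and by Proposition~\ref{P-10ab} the statement $z\in\stackrel{n}{F}_{\xi}(H)$ is precisely $\frak{X}_n(z,H)$ (for $n=1$ this is just the defining formula of $F_{\xi}$). Specializing $H=\{x\}$, $z=x\curlywedge y$ turns the premise of \eqref{f-65} into $(\exists n)\,\frak{X}_n(x\curlywedge y,\{x\})$; likewise the premises of \eqref{f-66} and \eqref{f-67} become $(\exists n)\,\frak{X}_n(x\curlywedge y,\{x,y\})$ and $(\exists n)\,\frak{X}_n(xy,\{x\})$. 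The final step is the elementary logical fact that, since none of the conclusions $x\curlywedge y=x$, $(x,y)\in\xi$, $(x,y)\in\delta$ depends on $n$, an implication of the form $(\exists n)P_n\longrightarrow Q$ is equivalent to $(\forall n)(P_n\longrightarrow Q)$. Applying this to the three conditions yields exactly the systems $(A_n)$, $(B_n)$, $(C_n)$, and conversely.

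I do not expect any real difficulty here; the only points needing attention are at the smallest indices. Because $\stackrel{0}{F}_{\xi}(H)=H\subset\stackrel{1}{F}_{\xi}(H)$ by Lemma~\ref{Lem}$(a)$, the base layer $z\in H$ is already captured by $\frak{X}_1(z,H)$, so the existential over $n$ may be taken over $n\geqslant 1$, which is exactly the range in which $\frak{X}_n$ is furnished by Proposition~\ref{P-10ab}; no separate treatment of $n=0$ is needed. One should also check that the big conjunctions in $\frak{X}_n$ degenerate correctly at $n=1$ (empty middle conjunction, singleton final conjunction) so that $\frak{X}_1$ coincides with the definition of $F_{\xi}(H)$. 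Both verifications are routine, and the theorem then follows from Theorem~\ref{T-6a}.
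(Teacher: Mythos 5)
Your proposal is correct and follows exactly the paper's own route: the paper likewise obtains Theorem~\ref{T-7ab} from Theorem~\ref{T-6a} by unfolding $f_{\xi}$ via \eqref{f-57} and Proposition~\ref{P-10ab}, turning each implication with premise $z\in f_{\xi}(H)$ into the family of implications indexed by $n$. Your extra care at the smallest indices (absorbing the $n=0$ layer into $\frak{X}_1$ and checking the degenerate conjunctions) is a point the paper passes over silently, but it changes nothing of substance.
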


The relation of semicompatibility and the relation of
semiadjacency in a semigroup of transformations can be
characterized by essentially infinite systems of elementary axioms
(for details see \cite{Schein}, \cite{pavl-2} and
\cite{garv-trokh}). Probably the axioms systems
$(A_n)_{n\in\mathbb{N}}$, $(B_n)_{n\in\mathbb{N}}$,
$(C_n)_{n\in\mathbb{N}}$ are also essentially infinite, i.e., they
are not equivalent to any finite subsystems, but this problem
requires further investigation.

\noindent{\bf Acknowledgment}

The authors are highly grateful to referees and M. V. Volkov for their valuable comments and suggestions for improving
the paper.

\begin{minipage}{60mm}
\begin{flushleft}
Dudek~W. A. \\
 Institute of Mathematics and Computer Science \\
 Wroclaw University of Technology \\
 50-370 Wroclaw \\
 Poland \\
 Email: Wieslaw.Dudek@im.pwr.wroc.pl
\end{flushleft}
\end{minipage}
\hfill
\begin{minipage}{60mm}
\begin{flushleft}
 Trokhimenko~V. S. \\
 Department of Mathematics \\
 Pedagogical University \\
 21100 Vinnitsa \\
 Ukraine \\
 Email: vtrokhim@gmail.com
 \end{flushleft}
 \end{minipage}

\begin{thebibliography}{99}

\bibitem{Clif}  Clifford A. H., Preston G. B., {\it The algebraic theory of
semigroups}, Amer. Math. Soc., Providence, R. I., vol. 1, 1961;
vol. 2, 1967.

\bibitem {Dud-Trokh} Dudek~W. A., Trokhimenko~V. S., {\it The relation of semiadjacency on
$\cap$-se\-mi\-groups of transformations}, Semigroup Forum {\bf 82}, 109--120 (2011)

\bibitem {Dudtro2} Dudek~W. A., Trokhimenko~V. S., {\it Algebras of multiplace functions},
de Gruyter GmbH \& Co., Berlin/Boston (2012)

\bibitem{Garv} Garvacki\v{\i}~V. S., {\it $\cap$-semigroups of transformations},
(Russian), Teor. Polugrupp Prilozh. {\bf 2}, 3--13 (1971) ({\it
Izdat. Saratov. Gos. Univ.})

\bibitem{garv-trokh}Garvacki\v{\i} V. S., Trokhimenko V. S., \textit{A semiadjacence relation
of certain transformation semigroups}, (Russian), Izv. Vys\v{s}.
U\v{c}ebn. Zaved. Matematika {\bf 2(117)}, 23--32 (1972)

\bibitem{pavl-2}Pavlovski\u{\i} E.V., \textit{The relation of semiadjacence in semigroups of
partial transformations}, (Russian), Izv. Vys\v{s}. U\v{c}ebn.
Zaved. Matematika {\bf 3(94)}, 70--75 (1970)

\bibitem{49} Sali\u{\i}~V.~N., {\it A system axioms for
transformative semigroups}, (Russian), Volzh. Mat. Sb. {\bf 5}, 342--345 (1966)

\bibitem{50} Sali\u{\i}~V.~N., {\it Transformative semigroups}
$(\frak{G},\circ,\xi,\chi_1, \chi_2)$, Sov. Math., Dokl. {\bf 9}, 320--323 (1968) (translation from Dokl. Akad. Nauk SSSR {\bf
179}, 30--33 (1968))

\bibitem{Schein} Schein B. M., {\it Transformative semigroups of
transformations}, Amer. Math. Soc., Translat., II. ser. {\bf 76}, 11--19 (1968) (translation from Mat. Sb. {\bf 71},
65--82 (1966).

\bibitem{Sch3} Schein B. M., {\it Lectures on semigroups of transformations}, Amer.
Math. Soc. Translat., II. ser. 113, 123--181 (1979)

\bibitem{trokhprim}Trokhimenko V. S., {\it Semiadjacence relation in algebras of
multiplace functions}, (Russian), Teor. Polugrupp Prilozh. {\bf 3}, 108--118 (1974) ({\it Izdat. Saratov. Gos. Univ.})

\bibitem{Trokh1} Trokhimenko~V.S., {\it A characterization of $\,{\cal
P}$-algebras of multiplace functions}, Siberian Math. J. {\bf 16}, 461--470 (1975) (translation from Sibirsk. Mat. Zh. {\bf 16},
599--611 (1975))

\bibitem{Trrel}  Trokhimenko V. S., {\it Some relativized $P$-algebras of multiplace
functions}, Sov. Math. {\bf 24}, 105--107 (1980) (translation
from Izv. Vyssh. Uchebn. Zaved. Matematika {\bf 6}, 85--86 (1980))

\bibitem{Vagner} Vagner V. V., {\it Transformative semigroups}, Amer. Math.
Soc., Translat., II, ser. {\bf 36}, 337--350 (1964) (translation
from Izv. Vys\v{s}. U\v{c}ebn. Zaved. Matematika {\bf 17}, 36--48 (1960))

\end{thebibliography}
\end{document}